\newtheorem{theorem}{Theorem}[section]
\newtheorem{lemma}[theorem]{Lemma}
\newtheorem{proposition}[theorem]{Proposition}
\theoremstyle{definition}
\theoremstyle{remark}
\newtheorem{remark}[theorem]{Remark}
\numberwithin{equation}{section}
\subjclass[2020]{Primary   35B35, 35A24; Secondary 35B10, 35B32. }
\keywords{Perturbation theory, bifurcation, stability,  Ginzburg-Landau equation, FitzHugh-Nagumo system, hyperbolic Burgers-Fisher model.}
\date{\today}
\begin{document}

\title[Perturbation of the spectra for asymptotically constant differential operators]
{Perturbation of the spectra for asymptotically constant differential operators and applications}

\maketitle

\vskip 1pt
\centerline{\scshape Shuang Chen}
\medskip
{\footnotesize
 \centerline{School of Mathematics and Statistics, Central China Normal University}
 \centerline{Wuhan, Hubei 430079, China}
 \centerline{Hubei Key Laboratory of Mathematical Sciences, Central China Normal University}
 \centerline{Wuhan, Hubei 430079, China}
   \centerline{{\rm{Email}: schen@ccnu.edu.cn}}
} 

\medskip

\centerline{\scshape Jinqiao Duan\footnote{The corresponding author}}
\medskip

{\footnotesize
 \centerline{Department of Applied Mathematics, Illinois Institute of Technology}
   \centerline{Chicago, IL 60616, USA}
 \centerline{Department of physics, Illinois Institute of Technology}
   \centerline{Chicago, IL 60616, USA}
    \centerline{{\rm{Email}: duan@iit.edu}}
} 

\medskip

\begin{abstract}

We study the spectra  for a class of differential operators with asymptotically constant coefficients.
These operators widely arise as the linearizations of nonlinear partial differential equations  about patterns or nonlinear waves.
We present a unified framework to prove the perturbation results on the related spectra.
The proof is based on exponential dichotomies and the Brouwer degree theory.
As applications, we employ the developed theory to study
the stability of quasi-periodic solutions of the Ginzburg-Landau equation,
fold-Hopf bifurcating periodic solutions of reaction-diffusion systems coupled with ordinary differential equations,
and periodic annulus  of the hyperbolic Burgers-Fisher model.

\end{abstract}

\section{Introduction}

The study of many structures,  such as nonlinear waves and patterns,
plays a pivotal role in understanding the underlying mechanisms of physical, biological and chemical phenomena.
One of the important problems is to study the dynamics of these permanent structures.
The key step towards this is to analyze the spectra of the related linearized operators.
This has attracted attention of many authors  and led to many interesting developments
in the past two decades.
We refer to \cite{Kapitula-Promislow-13,Sandstede-02} and references therein.

Our goal in the present paper is to study the spectra of
linear differential operators with asymptotically constant coefficients
arising in analyzing the local stability of patterns.
More precisely, we  consider a second-order operator of the form
\begin{eqnarray} \label{df-L}
\mathcal{L}p:=D\partial^{2}_{\xi}p+a_{1}(\xi,\epsilon)\partial_{\xi}p+a_{0}(\xi,\epsilon)p,
\end{eqnarray}
where $p(\xi)\in\mathbb{C}^{n}$, $\epsilon\in\mathbb{R}^{n_{1}}$ is a parameter vector, $\xi\in \mathbb{R}$ is the spatial variable,
$D$ is a $n\times n$ matrix, and  the coefficients $a_{j}(\xi,\epsilon)$ are
real matrix functions and smooth in $\xi$.

It is well known that many nonlinear evolutionary systems,
such as reaction-diffusion systems, hyperbolic systems of conservation laws and
Hamiltonian partial differential equations, admit
small-amplitude periodic traveling waves (wave trains) emerging from  homogeneous rest states,
and quasi-periodic patterns bifurcating from periodic waves.
See, for instance,
\cite{Alvarez-Plaza-21,Barker-Jung-Zumbrun-18,Chen-Duan-21,Doelman-Cardner-Jones-95,Duan-Holme-95,Dunbar-86,
Huang-Lu-Ruan-03,Kollar-etal-19,Sherratt-2008,Tsai-etal-2012}.
Typical examples are listed as follows:

{\bf Example 1.}  The first example is the real Ginzburg-Landau equation
\begin{eqnarray}\label{GLE}
u_{t}=(1-|u|^{2})u+\partial^{2}_{\xi}u.
\end{eqnarray}
This equation admits quasi-periodic steady  solutions of the form
$u(\xi)=p(\xi)e^{{\bf i}\theta(\xi)}$ for $\xi\in\mathbb{R}$,
where both $p(\xi)$ and $\partial_{\xi}\theta(\xi)$ are periodic.
See \cite{Doelman-Cardner-Jones-95,Newell-Whitehead-69} for instance.
By a direct computation, we see that $(p,\theta)$ satisfies
\begin{eqnarray}\label{GLE-p-theta}
p^{2}\theta'=\omega,\ \ \ \ p''+f(p)=0,
\end{eqnarray}
where $\omega\in\mathbb{R}$ and $f(p)=-\omega^{2}/p^{3}+p-p^{3}$.
For each $\omega$  with $|\omega|<\sqrt{4/27}$,
the function $f$ has two positive zeros $p=p_{1}$ and $p=p_{2}$ with $0<p_{1}<p_{2}$,
and the equivalent system of the second equation in \eqref{GLE-p-theta}
has a family of periodic solutions surrounding $(p_{1},0)$.
Let $T(p)$ denote the periods of these periodic solutions starting from $(p,0)$.
We observe that $T(p)$ has the limit
\begin{eqnarray*}
\lim_{p\to p_{1}}T(p)=2\pi/\sqrt{f'(p_{1})}.
\end{eqnarray*}
Let $\alpha(\xi,t)+{\bf i}\beta(\xi,t)=u(\xi,t)\exp(-{\bf i}\theta(\xi))$
for $\alpha$ and $\beta$ in $\mathbb{R}$.
Then $(\alpha,\beta)$ satisfies a two-component reaction-diffusion equation
(see (2.3) and (2.4) in \cite[p.505]{Doelman-Cardner-Jones-95}),
which has a periodic steady solution $(p(\xi),0)$.
The corresponding  linearization is in the form  \eqref{df-L}.
See (2.5) and (2.6) in \cite[p.505]{Doelman-Cardner-Jones-95}.
Doelman, Gardner and Jones \cite{Doelman-Cardner-Jones-95} once applied
the geometric and topological methods developed by Gardner in \cite{Gardner-93}
to obtain the instability of quasi-periodic solutions $u(\xi)=p(\xi)e^{{\bf i}\theta(\xi)}$
near the periodic solution $u_{0}(\xi)=p_{1}e^{{\bf i}k\xi}$,
where $k^{2}+p_{1}^{2}=1$ and $kp_{1}^{2}=\omega$.
Here, our criteria will give a simpler way to determine the stability of quasi-periodic solutions
near $u_{0}(\xi)$.

{\bf Example 2.} The second example is a system of one reaction-diffusion equation coupled with
one ordinary differential equation (ODE)
\begin{eqnarray} \label{PDE-ODE}
\begin{aligned}
\frac{\partial u}{\partial t} &=u_{xx}+f(u,w,\alpha),\\
\frac{\partial w}{\partial t} &=g(u,w,\alpha),
\end{aligned}
\end{eqnarray}
where $x\in\mathbb{R}$, $\alpha\in\mathbb{R}^{m}$ for $m\geq 1$, and $f$ and $g$ are sufficiently smooth.
This model includes the  FitzHugh-Nagumo system \cite{FitzHugh-60,Nagumo-62},
caricature calcium models \cite{Tsai-etal-2012},
and population dynamics models  \cite{Zhang-etal-17} as special cases.
If  $(u(x,t),w(x,t))=(\phi(x+ct),\psi(x+ct))$ is a traveling wave solution of \eqref{PDE-ODE},
then $(\phi(\cdot),\phi_{\xi}(\cdot), \psi(\cdot))$ satisfies  a three-dimensional ordinary differential system of the form
\begin{eqnarray} \label{3D-SYSTEM}
\begin{aligned}
\frac{d u}{d \xi} &= \dot u  = v,
\\
\frac{d v}{d \xi} &= \dot v  = c v-f(u,w,\alpha),
\\
\frac{d w}{d \xi} &= \dot w  = \frac{1}{c} g(u,w,\alpha).
\end{aligned}
\end{eqnarray}
Assume that this three-dimensional system has a fold-Hopf equilibrium for $(\alpha,c)=(\alpha_{0},c_{0})$,
whose Jacobian matrix has one zero eigenvalue and two purely imaginary eigenvalues $\pm{\bf i}\mu_{0}$ with $\mu_{0}>0$.
Then there exist two functions $\alpha=\alpha_{0}+O(\epsilon)$ and $c=c_{0}+O(\epsilon)$,
for sufficiently small $\epsilon>0$,
such that a small-amplitude periodic solution $(\phi_{\epsilon},\psi_{\epsilon})$ bifurcates from the fold-Hopf equilibrium
and satisfies
\[
|(\phi_{\epsilon},\psi_{\epsilon})|=O(\epsilon), \ \ \ T_{\epsilon}=2\pi/\mu_{0}+O(\epsilon),
\]
for sufficiently small $\epsilon$. See \cite[Theorem 2.3]{Chen-Duan-21}.
By applying a perturbation method, our recent work \cite{Chen-Duan-21} proved that
the linearization of \eqref{PDE-ODE} about this perturbed periodic wave is spectrally unstable.
However, we only considered the perturbation of  a special eigenvalue,
and did not discuss the perturbation of the spectral curves.

{\bf Example 3.} The third example is the hyperbolic Burgers-Fisher model
\begin{eqnarray} \label{H-model}
\begin{aligned}
u_{t}+v_{x} &=g(u),\\
\alpha v_{t}+u_{x} &=f(u)-v,
\end{aligned}
\end{eqnarray}
where $(u,v)\in\mathbb{R}^{2}$, $x\in\mathbb{R}$ and $t>0$.
Here the reaction is of the logistic growth:
\[
g(u)=u(1-u),
\]
and the nonlinear advection flux is the Burgers' flux
\[
f(u)=\frac{1}{2}u^{2}.
\]
The hyperbolic Burgers-Fisher model \eqref{H-model} is based on
the universal balance law and the constitutive law of Cattaneo-Maxwell type \cite{Cattaneo-49},
which are respectively governed by the first and the second equations in \eqref{H-model}.
We also refer to 
\cite{Alvarez-Murillo-Plaza-22,Fisher-37,Holmes-93,Joseph-Preziosi-89,Liu-87} for more information about this model.

If $(u(x,t),v(x,t))=(\phi(x+ct),\psi(x+ct))$ is a periodic traveling wave of
the hyperbolic Burgers-Fisher model \eqref{H-model},
then the profile $(\phi(\xi),\psi(\xi))$ satisfies the planar dynamical system
\begin{eqnarray} \label{H-2D-model}
\begin{aligned}
cu'+v' &=u(1-u),\\
u'+\alpha c v' &=\frac{1}{2}u^{2}-v,
\end{aligned}
\end{eqnarray}
where $'$ denotes the derivative with respect to $\xi$.

Assume that the wave speed $c$ satisfies $\alpha c^{2}<1$.
This assumption is the so-called  subcharacteristic condition.
Under this condition, system \eqref{H-2D-model} has exactly two equilibria:
$E_{1}:=(0,0)$ and $E_{2}:=(1,1/2)$.
When $\alpha=1$ and $c=-1/2$,
system \eqref{H-2D-model} is reduced to
\begin{eqnarray} \label{H-2D-model-1}
\begin{aligned}
u' &=\frac{2}{3}u-\frac{4}{3}v,\\
v' &=-u^{2}+\frac{4}{3}u-\frac{2}{3}v.
\end{aligned}
\end{eqnarray}
The system \eqref{H-2D-model-1} is an integrable system with the first integral
\[
H(u,v):=-\frac{2}{3}u^{2}+\frac{1}{3}u^{3}+\frac{2}{3}uv-\frac{2}{3}v^{2}, \ \ \ \ (u,v)\in\mathbb{R}^{2},
\]
and $E_{1}$ and $E_{2}$ are a center and a saddle, respectively.
We see that system \eqref{H-2D-model-1} has a periodic annulus, that is,
a family of periodic orbits  $(\phi_{h},\psi_{h}):=\left\{(u,v)\in\mathbb{R}^{2}: H(u,v)=h\right\}$ for $h\in(-1/6,0)$.
We are interested in the spectral stability of periodic solutions in the annulus
with respect to \eqref{H-model}.

Inspired by these interesting examples,
throughout this paper we assume that
the functions $a_{j}$ in \eqref{df-L} satisfy the following hypotheses:

\begin{enumerate}
\item[{\bf (H1)}]
For some small $\varepsilon_{0}>0$ and each $\epsilon$ with $0<|\epsilon|< \varepsilon_{0}$,
there exists a constant $T_{\epsilon}>0$ such that
\[
a_{j}(\xi+T_{\epsilon},\epsilon)=a_{j}(\xi,\epsilon),\ \ \ \ \ j=0,1, \ \ \ \xi\in\mathbb{R},
\]
where $T_{\epsilon}\to T_{0}>0$ as $\epsilon\to 0$.

\item[{\bf (H2)}]
There exist two constant matrices $a_{j}^{0}$ ($j=0,1$) such that
the periodic functions $a_{j}$ satisfy

\begin{eqnarray*}
|a_{j}(\xi,\epsilon)-a_{j}^{0}|=O(\epsilon),\ \  \ \ \ j=0,1,
\end{eqnarray*}
for sufficiently small $|\epsilon|$, uniformly in the variable $\xi$.
Then the coefficients $a_{j}(\xi,\epsilon)$ are said to be asymptotically constant with respect to $\epsilon$.
\end{enumerate}
Actually,
regarding the quasi-periodic patterns near a periodic wave in Example 1, the fold-Hopf bifurcating periodic waves in Example 2,
and the periodic waves near a constant steady state in Example 3,
the related linearized operators take the form \eqref{df-L}
whose coefficients have the properties in {\bf (H1)} and {\bf (H2)}
(see more details in Section \ref{sec:app}).

It is well known that the spectrum of the linear operator $\mathcal{L}$ considered on the whole line
has purely essential spectrum, and is determined by the Floquet multipliers of the related eigenvalue problem.
Those properties always cause a big obstacle in studying the spectrum of periodic problems \cite{Gardner-93,Kapitula-Promislow-13}.
The perturbation method for linear operators has been successfully applied to
study the spectral stability of small-amplitude periodic traveling waves,
whose linearizations are in the form \eqref{df-L} with {\bf (H1)} and {\bf (H2)}.
See, for instance, \cite{Alvarez-Plaza-21,Alvarez-Murillo-Plaza-22,Chen-Duan-21,Kollar-etal-19}
and the references therein.
In the view of perturbation theory,
it is possible to analyze the spectrum of the linear operator $\mathcal{L}$ with {\bf (H1)} and {\bf (H2)}
by the limiting case of $\mathcal{L}$, i.e.,
the limiting operator $$\mathcal{L}_{0}:=D\partial^{2}_{\xi}+a_{1}^{0}\partial_{\xi}+a_{0}^{0},$$
obtained by letting $\epsilon\to 0$ in \eqref{df-L}.
Since $\mathcal{L}_{0}$ is a linear differential operator with constant coefficients,
its spectral analysis can be accomplished by a direct computation.
This advantage is helpful to analyze the spectrum of $\mathcal{L}$ by perturbation techniques.

Following this idea,
\cite{Alvarez-Plaza-21,Alvarez-Murillo-Plaza-22} and our recent work \cite{Chen-Duan-21} applied  Kato's theory \cite{Kato-95}
to prove the spectral instability of Hopf bifurcating periodic waves for some hyperbolic models
and fold-Hopf bifurcating periodic waves for model \eqref{PDE-ODE}, respectively.
While Kato's theory is powerful,
complicated computations are always involved in determining whether
the linearized operators about small-amplitude perturbed periodic waves are
relatively bounded perturbations of the related  limiting operators.

In this paper, we shall present a new method which allows us to recover the previous results
in \cite{Alvarez-Plaza-21,Alvarez-Murillo-Plaza-22,Chen-Duan-21}. 
The proof for our criteria is based on exponential dichotomies \cite{Coppel-78},
combined with the Brouwer degree theory \cite{Browder-83}.
With our results, it becomes easier to check the instability of perturbed periodic waves
arising from Hopf bifurcation and fold-Hopf bifurcation in dissipative systems.
As applications, we apply our stability criteria to study
the stability of quasi-periodic solutions of the  Ginzburg-Landau equation \eqref{GLE},
fold-Hopf bifurcating periodic solutions of reaction-diffusion systems coupled with ODEs \eqref{PDE-ODE},
and periodic annulus associated with hyperbolic Burgers-Fisher model \eqref{H-model}.

We are also interested in the Floquet spectrum curves of the linear operator $\mathcal{L}$.
Although  the periodic Evans function method is applicable to locate them,
it is still difficult to deal with this problem.
As we see, the spectrum of $\mathcal{L}_{0}$ consists of several continuous curves
(called its {\it spectral curves}) if $\mathcal{L}_{0}$ is posed on $L^{2}(\mathbb{R})$
(see Section \ref{sec-Pro-limt}).
Intuitively, it is possible to study the Floquet spectrum curves of $\mathcal{L}$ by its limiting operator $\mathcal{L}_{0}$.
Under certain conditions,
we prove that some connected closed subsets of the spectral curves of $\mathcal{L}_{0}$
are perturbed to continuous closed curves in the spectrum of $\mathcal{L}$ (see (ii) of Theorem \ref{thm-1}),
yet without full description of the Floquet spectrum curves.
It is worth mentioning that
\cite{Bronski-Johnson-10} once derived an index to determine the local structure of the Floquet spectrum curves in the vicinity of the origin
for a generalized Korteweg-de Vries equation,
and then \cite{Johnson-Zumbrun-10} further connected the formal Whitham modulation theory with
the analysis of these Floquet spectrum curves.
We hope that our results together with the developed theory in  \cite{Bronski-Johnson-10,Johnson-Zumbrun-10} could
give more information on the Floquet spectrum curves of $\mathcal{L}$ arising from concrete practical problems.

This paper is organized as follows.
We first review exponential dichotomies and the Brouwer degree theory in Section \ref{sec-ED}.
Then we examine the properties of the limiting operators in three different cases in Section \ref{sec-Pro-limt}.
We present a detailed study of the perturbation of the related spectrum  in Section \ref{sec-pert}.
Finally, we apply the obtained results to study the local stability of patterns arising in the preceding three examples.

\section{Exponential dichotomies and Brouwer degree theory}
\label{sec-ED}

In this section,
we introduce some  basic results on exponential dichotomies for linear nonautonomous systems \cite{Coppel-78}
and the Brouwer degree theory \cite{Browder-83}.
Based on these two notions, we will obtain the spectral perturbation for
the linear operator $\mathcal{L}$ defined by \eqref{df-L}.

\subsection{Exponential dichotomies}
Exponential dichotomy describes the hyperbolicity of dynamical systems generated by linear maps or differential systems.
It plays a useful role in the study of the local stability of nonlinear waves \cite{Coppel-78,Kenneth-84,Kenneth-88,Sandstede-02}.
We introduce  two crucial properties of exponential dichotomies for  linear nonautonomous systems,
that is, roughness and admissibility of the exponential dichotomies.

Consider a linear nonautonomous system
\begin{eqnarray} \label{ODE}
X'=A(t)X,
\end{eqnarray}
where $ t\in\mathbb{R}$, $X\in\mathbb{C}^{n}$,
and the mapping $t\to A(t)\in\mathbb{C}^{n\times n}$ is continuous.
Let $\Phi(t)$ denote the principal fundamental matrix solution of the linear system \eqref{ODE}.
This system is said to admit an {\it  exponential dichotomy} on $\mathbb{R}$
if there exists a projection $P$ on $\mathbb{C}^{n}$, and two constants $K\geq 1$ and $\alpha>0$ such that
\begin{equation}
\begin{array}{cc}
|\Phi(t)P\Phi^{-1}(s)|\leq Ke^{-\alpha(t-s)}, &  t\geq s,\\
|\Phi(t)(I-P)\Phi^{-1}(s)|\leq Ke^{-\alpha(s-t)}, &  t\leq s,
\end{array}
\end{equation}
where $K$ and $\alpha$ are called the {\it bound} and the {\it exponent} of this dichotomies, respectively.
In particular,
if $A(t)$ is a constant matrix $A$ for every $t\in\mathbb{R}$,
then \eqref{ODE} admits an exponential dichotomy if and only if
the constant matrix $A$ has no eigenvalues with zero real parts.
We refer the readers to \cite{Coppel-78,Henry-81,Kuehn-19,Massera-66}
for more details on exponential dichotomies of linear  nonautonomous systems.

One of the most important properties of exponential dichotomies is the persistence of exponential dichotomies under
``small" perturbations in some sense.
This property is called the {\it roughness} of exponential dichotomies.
Consider a perturbed system of \eqref{ODE} in the form
\begin{eqnarray} \label{ODE-pert}
X'=(A(t)+B(t))X,
\end{eqnarray}
where the mapping $t\to B(t)\in\mathbb{C}^{n\times n}$ is continuous.
In order to study the spectrum of the linear operator $\mathcal{L}$ with periodic coefficients,
we further assume that the matrix functions $A(t)$ and $B(t)$ have period $T$,
that is, $A(t+T)=A(t)$  and $B(t+T)=B(t)$ for each $t\in\mathbb{R}$.

By the {\it Floquet Theorem} \cite[Theorem 7.1, p.118]{Hale-80},
there exists a constant matrix $Q$ and a periodic matrix function $P(t)$ with $P(t+T)=P(t)$
such that  the principal fundamental matrix solution $\Phi(t)$ of the linear system \eqref{ODE}
has the form
\begin{eqnarray}\label{Floq}
\Phi(t)=P(t)e^{Qt},\ \ \ \ \ t\in\mathbb{R}.
\end{eqnarray}
We call the matrix $e^{QT}$ the {\it monodromy operator} of \eqref{ODE},
and its eigenvalues the {\it Floquet multipliers}.
The eigenvalues of the matrix $Q$ are said to be the {\it Floquet exponents} of \eqref{ODE}.
We next use the Floquet exponents to characterize the exponential dichotomies of linear periodic systems,
and further show the roughness of these constants under small perturbations.

\begin{lemma}\label{lm-Period}
Assume that the matrix functions $A(t)$ and $B(t)$ in \eqref{ODE-pert} have period $T$.
Let  $\alpha_{j}+{\bf i}\beta_{j}$ and $\tilde{\alpha}_{j}+{\bf i}\tilde{\beta}_{j}$, $j=1,2,...,n$, denote
all  Floquet exponents of \eqref{ODE} and \eqref{ODE-pert}, respectively.
Then the following properties are satisfied:
\begin{enumerate}
\item[(i)] If all Floquet exponents have nonzero real part,
that is, $\alpha_{j}\neq 0$ for $j=1,2,...,n$,
then \eqref{ODE} admits an exponential dichotomy.

\item[(ii)] Assume that $|\alpha_{j}|+|\beta_{j}|\neq0$ for $j=1,2,...,n$.
Then there exists a sufficiently small $\delta_{0}>0$ such that
for each $\delta\in [0,\delta_{0}]$, $\tilde{\alpha}_{j}+{\bf i}\tilde{\beta}_{j}$
satisfy $|\tilde{\alpha}_{j}|+|\tilde{\beta}_{j}|\neq0$ for $j=1,2,...,n$,
and $(\tilde{\alpha}_{j},\tilde{\beta}_{j})\to (\alpha_{j},\beta_{j})$ as $\delta\to 0$.
\end{enumerate}
\end{lemma}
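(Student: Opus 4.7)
The plan is to handle (i) by reducing \eqref{ODE} to a constant-coefficient system via the Floquet change of variables, and to handle (ii) by combining continuous dependence of the monodromy matrix on $B$ with continuity of eigenvalues and a controlled lifting to the complex logarithm.

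For part (i), I would invoke the Floquet representation \eqref{Floq} to write $\Phi(t)=P(t)e^{Qt}$, where $P$ is continuous and $T$-periodic (hence $P$ and $P^{-1}$ are uniformly bounded on $\mathbb{R}$), and $Q$ has spectrum $\{\alpha_{j}+{\bf i}\beta_{j}\}_{j=1}^{n}$. The substitution $X(t)=P(t)Y(t)$ converts \eqref{ODE} into the autonomous system $Y'=QY$. Under the hypothesis $\alpha_{j}\neq 0$ for every $j$, the matrix $Q$ is hyperbolic, so $Y'=QY$ admits an exponential dichotomy on $\mathbb{R}$ with the spectral projection $\Pi_{s}$ onto the stable invariant subspace of $Q$ and with exponent $\min_{j}|\alpha_{j}|$. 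Pulling this dichotomy back through $X=P(t)Y$ produces an exponential dichotomy for \eqref{ODE} with projection $P(t)\Pi_{s}P(t)^{-1}$; the uniform bounds on $P$ and $P^{-1}$ only inflate the dichotomy constant $K$ while preserving the exponent.

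For part (ii), the starting point is the standard continuous-dependence theorem for linear ODEs, which implies that the monodromy matrix of \eqref{ODE-pert} depends continuously on $B\in C([0,T],\mathbb{C}^{n\times n})$ in the operator norm. Interpreting $\delta$ as a bound on the magnitude of $B$, we obtain $\Phi_{B}(T)\to\Phi_{0}(T)$ as $\delta\to 0$. Since the eigenvalues of a matrix depend continuously on its entries (they are roots of the characteristic polynomial), the Floquet multipliers $\tilde{\mu}_{j}:=e^{(\tilde{\alpha}_{j}+{\bf i}\tilde{\beta}_{j})T}$ of the perturbed system may be labeled so that they converge to the Floquet multipliers $\mu_{j}:=e^{(\alpha_{j}+{\bf i}\beta_{j})T}$ of the unperturbed one.

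To lift this convergence from multipliers to exponents, I would use the hypothesis $|\alpha_{j}|+|\beta_{j}|\neq 0$ to isolate each $\alpha_{j}+{\bf i}\beta_{j}$ inside an open disk $U_{j}\subset\mathbb{C}\setminus\{0\}$ small enough that the $U_{j}$ are pairwise disjoint and no two of them differ by $2\pi{\bf i}/T$. Fixing the branch of the matrix logarithm determined by the original $Q$, a local inverse of the map $z\mapsto e^{zT}$ carries a suitable neighborhood of each $\mu_{j}$ into $U_{j}$; for $\delta\leq\delta_{0}$ with $\delta_{0}$ sufficiently small, the perturbed multipliers lie in these neighborhoods and their lifts $\tilde{\alpha}_{j}+{\bf i}\tilde{\beta}_{j}$ land in $U_{j}$. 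This yields simultaneously the convergence $(\tilde{\alpha}_{j},\tilde{\beta}_{j})\to(\alpha_{j},\beta_{j})$ and the required nonvanishing $|\tilde{\alpha}_{j}|+|\tilde{\beta}_{j}|\neq 0$. The main technical point is this lifting step, since the logarithm is multi-valued and the spectrum of $Q$ is only determined modulo $2\pi{\bf i}/T$; the hypothesis $|\alpha_{j}|+|\beta_{j}|\neq 0$ keeps each exponent uniformly away from the branch-point $0$, which is exactly what is needed for a consistent single-valued selection to survive the perturbation.
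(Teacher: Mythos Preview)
Your proposal is correct and follows exactly the route the paper gestures at: the paper's own proof consists of a single line invoking the Floquet representation \eqref{Floq} for (i) and a citation to Coppel \cite{Coppel-78} for (ii), so you have simply supplied the standard argument that the paper leaves implicit. One small caveat: your requirement that the disks $U_{j}$ be pairwise disjoint presupposes that the unperturbed Floquet exponents are all distinct, which is not assumed; in the presence of repeated exponents you should instead group equal values and track multiplicities through the continuity-of-roots argument, but this changes nothing essential.
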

\begin{proof}
The result in (i) is proved by \eqref{Floq}.
The proof of (ii) is standard in the literature (see, for instance, \cite{Coppel-78}).
\end{proof}

In order to analyze the spectrum of the second-order operator $\mathcal{L}$ in \eqref{df-L},
we need another property of exponential dichotomies.

\begin{lemma} \cite[p.73]{Kapitula-Promislow-13}
\label{lm-admis}
{\rm (Admissibility of exponential dichotomies)}
Suppose that the linear system \eqref{ODE} admits an exponential dichotomy.
Then for each given $f\in L^{2}(\mathbb{R})$,
the inhomogeneous linear  system
\begin{eqnarray}\label{eq:linear-inhomo}
X'=A(t)X+f(t), \ \ \ \ \ t\in \mathbb{R},
\end{eqnarray}
has a unique solution $X\in L^{2}(\mathbb{R})$  which satisfies
 the estimate $\|X\|_{2}\leq K \|f\|_{2}$ for some constant $K>0$ independent of $f$.
\end{lemma}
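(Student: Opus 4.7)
The plan is to construct the solution explicitly via a Green's function associated with the dichotomy and then verify membership in $L^2$ through a convolution estimate. Write $\Phi(t)$ for the principal fundamental matrix of \eqref{ODE} and let $P$ be the dichotomy projection, with bound $K$ and exponent $\alpha$. Define the Green's kernel
\begin{eqnarray*}
G(t,s):=\begin{cases}
\Phi(t)P\Phi^{-1}(s), & t\geq s,\\
-\Phi(t)(I-P)\Phi^{-1}(s), & t<s,
\end{cases}
\end{eqnarray*}
so the dichotomy estimates give $|G(t,s)|\leq K e^{-\alpha|t-s|}$ uniformly in $t,s\in\mathbb{R}$. The candidate solution is
\begin{eqnarray*}
X(t):=\int_{-\infty}^{t}\Phi(t)P\Phi^{-1}(s)f(s)\,ds-\int_{t}^{+\infty}\Phi(t)(I-P)\Phi^{-1}(s)f(s)\,ds.
\end{eqnarray*}

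Next I would justify convergence of both integrals for $f\in L^{2}(\mathbb{R})$ and verify that $X$ solves \eqref{eq:linear-inhomo}. The first point follows from Cauchy-Schwarz: each integrand is pointwise bounded by $Ke^{-\alpha|t-s|}|f(s)|$, and $\int e^{-2\alpha|t-s|}\,ds<\infty$. Differentiating under the integral sign and using $\Phi'(t)=A(t)\Phi(t)$ together with the cancellation $P+(I-P)=I$ at $s=t$ produces $X'(t)=A(t)X(t)+f(t)$. For the $L^{2}$ bound, the pointwise inequality
\begin{eqnarray*}
|X(t)|\leq K\int_{-\infty}^{+\infty}e^{-\alpha|t-s|}|f(s)|\,ds
\end{eqnarray*}
is a convolution of $|f|$ with the kernel $Ke^{-\alpha|\cdot|}\in L^{1}(\mathbb{R})$ of norm $2K/\alpha$, so Young's inequality for convolutions yields $\|X\|_{2}\leq(2K/\alpha)\|f\|_{2}$, giving the claimed constant $K$ (in the lemma's notation) depending only on the dichotomy data.

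Finally I would establish uniqueness. If $X_{1},X_{2}\in L^{2}(\mathbb{R})$ are both solutions of \eqref{eq:linear-inhomo}, then $Y:=X_{1}-X_{2}$ satisfies the homogeneous system and admits the decomposition $Y(t)=\Phi(t)Pv+\Phi(t)(I-P)w$ for some $v,w\in\mathbb{C}^{n}$. The dichotomy bounds force $|\Phi(t)Pv|$ to decay as $t\to+\infty$ but, unless $v=0$, this component generically has a nontrivial persistent piece that obstructs $L^{2}$ membership on $(-\infty,0]$; more directly, one notes that the stable component lies in $L^{2}(\mathbb{R}_{+})$ only if $Pv=0$, and symmetrically the unstable component lies in $L^{2}(\mathbb{R}_{-})$ only if $(I-P)w=0$. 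Hence $Y\equiv 0$. The main obstacle is this uniqueness argument, since one must exploit the dichotomy to exclude nonzero bounded solutions of the homogeneous equation; once this is set up, existence and the estimate are routine convolution bookkeeping.
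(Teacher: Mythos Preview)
The paper does not prove this lemma; it is quoted from Kapitula--Promislow with a citation and no argument is given. Your proof is the standard Green's-function construction and is correct in outline: the kernel bound $|G(t,s)|\le Ke^{-\alpha|t-s|}$, the convolution estimate via Young's inequality, and uniqueness via exponential growth of nontrivial homogeneous solutions are exactly the ingredients one finds in Coppel or Kapitula--Promislow.

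There is a slip in the uniqueness paragraph worth fixing. For a homogeneous solution one has $Y(t)=\Phi(t)Y(0)$, so the decomposition must read $Y(t)=\Phi(t)PY(0)+\Phi(t)(I-P)Y(0)$ with the \emph{same} vector in both terms, not independent $v,w$. More importantly, you have the half-lines reversed: the stable piece $\Phi(t)PY(0)$ \emph{always} lies in $L^{2}(\mathbb{R}_{+})$ (it decays there) and fails to lie in $L^{2}(\mathbb{R}_{-})$ unless $PY(0)=0$; symmetrically the unstable piece always lies in $L^{2}(\mathbb{R}_{-})$ and fails on $\mathbb{R}_{+}$ unless $(I-P)Y(0)=0$. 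The required lower bounds come from applying the dichotomy estimate in the opposite time direction, e.g.\ from $|P\Phi^{-1}(t)|=|\Phi(0)P\Phi^{-1}(t)|\le Ke^{\alpha t}$ for $t\le 0$ one obtains $|\Phi(t)PY(0)|\ge K^{-1}e^{-\alpha t}|PY(0)|$. Combining the decay of one piece with the growth of the other on each half-line rules out cancellation and forces $Y(0)=0$. With these corrections your argument is complete.
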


This lemma indicates the admissibility of the pair $(L^{2}(\mathbb{R}), L^{2}(\mathbb{R}))$,
i.e., for every ``text function" $f$ in $L^{2}(\mathbb{R})$,
the inhomogeneous linear  system \eqref{eq:linear-inhomo} has a unique solution in $L^{2}(\mathbb{R})$.
This property is closely related to the Fredholm properties of the linearizations about nonlinear waves.
We also refer the readers to \cite{Coppel-78,Massera-66} for
the admissibility of other pairs of Banach spaces.

\subsection{Brouwer degree theory}

We see that the spectrum of the linear operator $\mathcal{L}$
can be detected by solving the zeros of an analytic function.
For this reason, we present basic results on the Brouwer degree theory.
This notion plays a useful  role in proving the existence of  zeros in bounded sets for analytic functions.

\begin{lemma}\cite[Theorem 1, p.5]{Browder-83}\label{lm-BDT-1}
Let $U$ be a bounded open set of $\mathbb{R}^{d}$ and its closure and boundary
are  denoted by $\bar{U}$ and $\partial U$, respectively.
Consider a continuous map $f: \bar{U}\to \mathbb{R}^{d}$,
and a point $z_{0}$ in $\mathbb{R}^{d}$ such that $z_{0}$ does not lie in $f(\partial U)$.
Then to each such triple $(f,U,z_{0})$, there corresponds an integer $d(f,U,z_{0})$
having the following properties:
\begin{enumerate}
\item[(i)] If $d(f,U,z_{0})\neq 0$, then $z_{0}\in f(U)$.
If $f$ is the identity map on $\mathbb{R}^{d}$,
then for each $z_{0}\in U$,
the restriction $f|_{U}$ of $f$ to $U$ satisfies
\begin{eqnarray*}
d(f|_{U},U,z_{0})=+1.
\end{eqnarray*}

\item[(ii)]
{\rm (Additivity)} If $U_{1}$ and $U_{2}$ are a pair of disjoint open subsets of $U$ such that
$z_{0}\notin f(\bar{U}/(U_{1}\cap U_{2}))$,
then $$d(f,U,z_{0})=d(f,U_{1},z_{0})+d(f,U_{2},z_{0}).$$

\item[(iii)] {\rm (Invariance under homotopy)}
Consider a continuous homotopy
$\{f_{t}: 0\leq t\leq 1\}$ of maps of $\bar{U}$ into $\mathbb{R}^{d}$.
Let $\{z_{t}: 0\leq t\leq 1\}$ be a continuous curve in $\mathbb{R}^{d}$ such that $z_{t}\notin f_{t}(\partial U)$
for each $t\in [0,1]$. Then $d(f_{t},U,z_{t})$ is constant for $t\in [0,1]$.
\end{enumerate}
\end{lemma}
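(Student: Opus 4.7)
The plan is to construct the integer $d(f,U,z_{0})$ directly for smooth $f$ at regular values via a signed count of preimages, extend by approximation to arbitrary continuous $f$ and arbitrary admissible $z_{0}$, and then verify that the three listed axioms survive the passage to the limit. Throughout, the hypothesis $z_{0}\notin f(\partial U)$ together with compactness of $\bar{U}$ provides a positive separation $\delta:=\mathrm{dist}(z_{0},f(\partial U))>0$ that makes all approximation arguments work.

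For $f\in C^{2}(\bar{U},\mathbb{R}^{d})$ with $z_{0}\notin f(\partial U)$ a regular value of $f$, I would first set
\begin{equation*}
d(f,U,z_{0}) \;:=\; \sum_{x\in f^{-1}(z_{0})\cap U}\operatorname{sign}\det Df(x),
\end{equation*}
which is a finite sum because $f^{-1}(z_{0})\cap U$ is compact (by the separation of $z_{0}$ from $f(\partial U)$) and isolated (by the inverse function theorem at each regular point). The identity half of property (i) is immediate, and the additivity statement (ii) is immediate from the definition whenever the preimages of $z_{0}$ avoid $\bar U\setminus(U_{1}\cup U_{2})$. To extend to critical values I would invoke Sard's theorem to pick regular values arbitrarily close to $z_{0}$ inside the same component of $\mathbb{R}^{d}\setminus f(\partial U)$, and to extend to merely continuous $f$ I would take a smooth approximation $f_{n}\to f$ with $\|f-f_{n}\|_{\infty}<\delta/2$ and define $d(f,U,z_{0})$ as the common eventual value of $d(f_{n},U,z_{0})$.

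The main obstacle is proving that these extensions are well-defined, which is essentially equivalent to establishing homotopy invariance (property (iii)). My preferred route is analytic: represent the degree by the integral
\begin{equation*}
d(f,U,z_{0}) \;=\; \int_{U}\omega(f(x))\,\det Df(x)\,dx,
\end{equation*}
where $\omega$ is a smooth nonnegative bump of total mass one supported in the component of $\mathbb{R}^{d}\setminus f(\partial U)$ containing $z_{0}$. A Stokes/divergence computation shows that this integer-valued quantity is independent of the choice of $\omega$ within that component, agrees with the signed preimage count when $z_{0}$ is regular, and is constant along any admissible homotopy $(f_{t},z_{t})$ with $z_{t}\notin f_{t}(\partial U)$, since differentiating in $t$ reduces to a boundary integral on $\partial U$ that vanishes by the separation hypothesis. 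Once (iii) is established, properties (i) and (ii) propagate from the smooth regular-value setting to the full continuous setting through the approximations above, and the nonvanishing half of (i) follows because an empty preimage would force the signed count to be zero for every sufficiently close smooth regular-value approximation.
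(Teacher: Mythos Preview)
The paper does not prove this lemma: it is quoted verbatim as \cite[Theorem 1, p.5]{Browder-83} and used as a black box, so there is no ``paper's own proof'' to compare against. Your outline is the standard analytic construction of the Brouwer degree (signed preimage count at regular values, Sard's theorem, smooth approximation, and the integral representation to obtain homotopy invariance), and it is correct in spirit; but for the purposes of this paper no proof is expected---the authors simply cite the result and move on to Lemmas~\ref{lm-BDT-2} and~\ref{lm-BDT-ANA}, which are what they actually use.
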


\begin{lemma}  \cite[Theorem 2, p.5]{Browder-83}\label{lm-BDT-2}
The degree function $(f,U,z_{0})$ is uniquely determined by the three properties in Lemma \ref{lm-BDT-1}.
\end{lemma}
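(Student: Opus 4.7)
The plan is to show that any integer-valued function $d(\cdot,\cdot,\cdot)$ satisfying the three axioms (normalization, additivity, and homotopy invariance) must coincide on every admissible triple $(f,U,z_0)$ with the classical Brouwer degree, and hence is uniquely determined. The strategy is the standard one: reduce a general continuous situation to the case of a smooth map at a regular value, where the degree forces itself to be a specific integer.

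First, I would perform two reductions using homotopy invariance. Given an admissible triple $(f,U,z_0)$, choose $\varepsilon>0$ smaller than the distance from $z_0$ to $f(\partial U)$. By the Weierstrass approximation theorem one can find a $C^1$ map $g\colon \bar U\to\mathbb{R}^d$ with $\|g-f\|_\infty<\varepsilon/2$; then the straight-line homotopy $f_t=(1-t)f+tg$ keeps $z_0$ off the image of $\partial U$, so property (iii) yields $d(f,U,z_0)=d(g,U,z_0)$. Next, by Sard's theorem the set of regular values of $g$ is dense, so one can pick a regular value $z_1$ of $g$ with $|z_1-z_0|<\varepsilon/2$; the constant path from $z_0$ to $z_1$ stays off $g(\partial U)$, and another application of homotopy invariance gives $d(g,U,z_0)=d(g,U,z_1)$. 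Thus it suffices to determine $d(g,U,z_1)$ for a $C^1$ map $g$ at a regular value $z_1$.

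Second, I would apply the additivity axiom. Since $z_1$ is a regular value and $U$ is bounded, the preimage $g^{-1}(z_1)\cap U=\{x_1,\dots,x_k\}$ is finite, and around each $x_i$ the inverse function theorem furnishes pairwise disjoint open balls $B_i\subset U$ on which $g$ is a $C^1$-diffeomorphism onto a neighborhood of $z_1$. Since $z_1\notin g(\bar U\setminus\bigcup_i B_i)$, property (ii) yields the decomposition
\begin{equation*}
d(g,U,z_1)=\sum_{i=1}^{k}d(g,B_i,z_1).
\end{equation*}
The problem is thereby reduced to computing each local degree $d(g,B_i,z_1)$.

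Finally, I would evaluate the local degrees. On $B_i$ the map $g$ is homotopic, through maps that keep $z_1$ off the boundary, to its linear part $L_i=Dg(x_i)(\cdot - x_i)+z_1$; since $L_i\in GL(\mathbb{R}^d)$ is further homotopic within $GL(\mathbb{R}^d)$ to either the identity (if $\det Dg(x_i)>0$) or a single reflection (if $\det Dg(x_i)<0$), homotopy invariance together with the normalization in (i), applied after an affine translation sending $x_i,z_1$ to $0,0$, pins down $d(g,B_i,z_1)=\operatorname{sgn}\det Dg(x_i)\in\{\pm 1\}$. Combining with the previous step gives the explicit formula
\begin{equation*}
d(f,U,z_0)=\sum_{x\in g^{-1}(z_1)\cap U}\operatorname{sgn}\det Dg(x),
\end{equation*}
which depends only on $(f,U,z_0)$ and so proves uniqueness. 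The main obstacle in this scheme is the last step: verifying that a linear isomorphism of $\mathbb{R}^d$ is homotopic in $GL(\mathbb{R}^d)$ to $\pm I$ according to the sign of its determinant, i.e., that $GL(\mathbb{R}^d)$ has exactly two path components; once this classical fact is invoked, the axioms force the value of the degree.
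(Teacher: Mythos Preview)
The paper does not supply its own proof of this lemma; it is merely quoted from Browder, so there is no argument in the paper to compare against. Your sketch is the standard uniqueness proof and is largely sound, but there is one genuine gap in the final step that you should not dismiss as merely a question about the connectivity of $GL(\mathbb{R}^d)$.

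You write that ``homotopy invariance together with the normalization in (i) \dots\ pins down $d(g,B_i,z_1)=\operatorname{sgn}\det Dg(x_i)$.'' For the orientation-preserving case this is correct: a linear isomorphism with positive determinant is path-connected in $GL(\mathbb{R}^d)$ to the identity, and axiom (i) then gives $+1$. But for the orientation-reversing case the normalization axiom says nothing: it only fixes the degree of the \emph{identity}, not of a reflection. Knowing that every negative-determinant isomorphism is homotopic to a fixed reflection $R$ tells you only that all such maps share a common degree $c\in\mathbb{Z}$; it does not force $c=-1$. That value has to be \emph{computed} from the axioms, and the computation necessarily uses additivity, not just (i) and (iii).

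The standard way to close this gap is as follows. Take $U=(-2,2)\times B'\subset\mathbb{R}^d$ with $B'$ a small ball in $\mathbb{R}^{d-1}$, and set $f_t(x)=(x_1^2-1+2t,\,x_2,\dots,x_d)$ for $t\in[0,1]$. One checks $0\notin f_t(\partial U)$ for all $t$. Since $f_1$ omits $0$ entirely, the additivity axiom (applied with $U_1=U_2=\varnothing$) gives $d(f_1,U,0)=0$, and homotopy invariance then yields $d(f_0,U,0)=0$. But $f_0$ has exactly two zeros, at $(\pm 1,0)$, with Jacobians $\operatorname{diag}(\pm 2,1,\dots,1)$; choosing disjoint balls around them, additivity gives $0=d(f_0,B_+,0)+d(f_0,B_-,0)=1+c$, hence $c=-1$. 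Once this is inserted, your argument is complete.
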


Now with the help of the above two  lemmas,
we consider the Brouwer degree for analytic functions from $\mathbb{C}$ to itself.
Let $f:\mathbb{C}\to \mathbb{C}$ be an analytic function.
Then all zeros of $f$ are isolated.
Let $U$ be a bounded open subset of $\mathbb{C}$
and $z_{0}$ be a point in $\mathbb{C}$ with $z_{0}\notin f(\partial U)$.
For the triple $(f,U,z_{0})$, we define $d_{B}(f,U,z_{0})$ by
\begin{eqnarray}\label{df-BDT}
d_{B}(f,U,z_{0})=\mbox{the number of zeros of $(f-z_{0})$ in $U$ (counted by multiplicity)}.
\end{eqnarray}

The next lemma is useful to detect the essential spectrum for small-amplitude periodic problems.
\begin{lemma}\label{lm-BDT-ANA}
The function $d_{B}(f,U,z_{0})$ defined by \eqref{df-BDT} is the Brouwer degree of $f$ at $z_{0}$.
\end{lemma}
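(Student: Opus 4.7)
The plan is to invoke the uniqueness of the Brouwer degree from Lemma \ref{lm-BDT-2}: it suffices to verify that the integer defined in \eqref{df-BDT} coincides with the topological degree $d(f,U,z_{0})$ characterized in Lemma \ref{lm-BDT-1}. Since $f$ is analytic and $z_{0}\notin f(\partial U)$, the zeros of $f-z_{0}$ in $\bar{U}$ are isolated and lie inside $U$; compactness of $\bar{U}$ makes them finitely many, say $z_{1},\ldots,z_{k}$, with respective multiplicities $m_{1},\ldots,m_{k}$, so $d_{B}(f,U,z_{0})=\sum_{j=1}^{k}m_{j}$ is well defined.

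The first step is to localize. I would choose pairwise disjoint open disks $D_{j}\subset U$ centered at $z_{j}$ on which $f-z_{0}$ has no other zero. Iterating the additivity property in Lemma \ref{lm-BDT-1}(ii) gives
\[
d(f-z_{0},U,0)=\sum_{j=1}^{k}d(f-z_{0},D_{j},0),
\]
so it is enough to prove $d(f-z_{0},D_{j},0)=m_{j}$ for each $j$. On $D_{j}$ factor $f(z)-z_{0}=(z-z_{j})^{m_{j}}h_{j}(z)$ with $h_{j}$ analytic and $h_{j}(z_{j})\neq 0$, and shrink $D_{j}$ so that $h_{j}$ is nowhere zero on $\bar{D}_{j}$. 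The straight-line homotopy $H_{t}(z)=(z-z_{j})^{m_{j}}\bigl[(1-t)h_{j}(z)+t\,h_{j}(z_{j})\bigr]$, followed by $G_{s}(z)=(z-z_{j})^{m_{j}}\gamma(s)$ with $\gamma$ any path in $\mathbb{C}\setminus\{0\}$ from $h_{j}(z_{j})$ to $1$, both remain nonvanishing on $\partial D_{j}$ provided $D_{j}$ is small enough (continuity of $h_{j}$ prevents $(1-t)h_{j}(z)+t h_{j}(z_{j})$ from lying on the ray opposite to $h_{j}(z_{j})$). Hence Lemma \ref{lm-BDT-1}(iii) yields $d(f-z_{0},D_{j},0)=d\bigl((z-z_{j})^{m_{j}},D_{j},0\bigr)$.

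The final step is to identify $d\bigl((z-z_{j})^{m_{j}},D_{j},0\bigr)=m_{j}$, and this is the main obstacle in the proof. The map $z\mapsto(z-z_{j})^{m_{j}}$ sends $\partial D_{j}$ to a loop winding $m_{j}$ times around the origin, and the planar Brouwer degree equals this winding number. I would derive this either from the classical identification of the two-dimensional degree with the winding number (a consequence of homotopy invariance together with the normalization $d(\mathrm{id},D_{j},z_{j})=1$ from Lemma \ref{lm-BDT-1}(i), after reducing $z\mapsto z^{m_{j}}$ by a standard braid-type homotopy), or, most expediently, by invoking the argument principle
\[
\frac{1}{2\pi\mathrm{i}}\oint_{\partial D_{j}}\frac{m_{j}(z-z_{j})^{m_{j}-1}}{(z-z_{j})^{m_{j}}}\,dz=m_{j},
\]
which sidesteps any further axiomatic manipulation. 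Summing over $j$ gives $d(f,U,z_{0})=\sum_{j=1}^{k}m_{j}=d_{B}(f,U,z_{0})$, completing the proof.
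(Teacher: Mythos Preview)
Your argument is correct but follows a route different from the paper's. You compute the Brouwer degree $d(f,U,z_{0})$ directly: additivity localizes to disks around each zero, homotopy invariance reduces to the model map $(z-z_{j})^{m_{j}}$, and you then identify the local degree of a pure power as its exponent. The paper proceeds the other way around: it verifies that the zero-counting function $d_{B}$ itself satisfies the three axioms of Lemma~\ref{lm-BDT-1}. Axiom~(i) is immediate, axiom~(ii) follows from the Argument Principle, and axiom~(iii)---homotopy invariance of the zero count along a continuous family of analytic maps---is obtained from Lemma~\ref{lm-app-1} (continuity of roots in parameters) together with a finite-cover argument on $[0,1]$. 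Uniqueness (Lemma~\ref{lm-BDT-2}) then forces $d_{B}=d$. Your route avoids Lemma~\ref{lm-app-1} but must instead establish $d\bigl((z-z_{j})^{m_{j}},D_{j},0\bigr)=m_{j}$; your first suggestion (perturb to $m_{j}$ simple zeros and reduce each linearization to the identity via a path in $\mathbb{C}\setminus\{0\}$) is the sound one, whereas your ``expedient'' appeal to the argument principle is circular here---the integral merely recounts the zeros and says nothing about $d$ unless one has already imported the degree--winding-number identification, which is not part of the axiomatic framework of Lemmas~\ref{lm-BDT-1}--\ref{lm-BDT-2}. Note also that, despite your opening sentence, uniqueness plays no role in your actual argument: you compute $d$ rather than verify axioms for $d_{B}$.
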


Before giving the proof for Lemma \ref{lm-BDT-ANA},
we first introduce an important result on continuity of the roots of an equation as a function of parameters.
\renewcommand\thelemma{A}
\begin{lemma}\cite[Theorem 9.17.4, p.248]{Dieudonne-69}\label{lm-app-1}
Let $V$ be an open subset of $\mathbb{C}$, $B$ be a metric space, and $f$ be a continuous complex valued function
in $V\times B$ such that for each $s$ in $B$, the function $f(\cdot,s)$ is analytic in $V$.
Let $U$ be an open subset of $V$, whose closure $\bar{U}$ is compact and contained in $V$,
and let $s_{0}\in B$ satisfy $0\notin f(\partial U,s_{0})$.
Then there exists an open neighborhood $\mathcal{U}(s_{0})$ of $s_{0}$ in $B$ such that
for each $s\in \mathcal{U}(s_{0})$, the following properties hold:
\begin{enumerate}
\item[(i)]  The function $f(\cdot,s)$ has no zeros on $\partial U$.

\item[(ii)] The number of the zeros of $f(\cdot,s)$ that belong to $U$ is independent of $s$.
\end{enumerate}
\end{lemma}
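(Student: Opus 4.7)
The plan is to prove the two assertions in the standard two-step fashion, separating the boundary non-vanishing statement from the constancy of the zero count.

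For part (i), I would exploit the joint continuity of $f$ together with the compactness of $\partial U$. Since $f(\cdot,s_0)$ is continuous and nowhere zero on the compact set $\partial U$, there exists $\delta>0$ such that $|f(z,s_0)|\geq 2\delta$ for every $z\in\partial U$. Invoking continuity of $f$ at each point $(z,s_0)\in\partial U\times\{s_0\}$ and extracting a finite subcover of $\partial U$, I would produce an open neighborhood $\mathcal{U}_0(s_0)\subset B$ such that $|f(z,s)-f(z,s_0)|<\delta$ for every $z\in\partial U$ and every $s\in\mathcal{U}_0(s_0)$. The triangle inequality then gives $|f(z,s)|\geq \delta>0$ on $\partial U$, which is exactly (i).

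For part (ii), the idea is to localize the zeros into small disks and use Rouch\'e's theorem on each. Because $f(\cdot,s_0)$ is analytic on $V$ and its zero set in the compact set $\bar U$ is discrete, it is finite: denote its zeros in $U$ by $z_1,\dots,z_k$ with multiplicities $m_1,\dots,m_k$. I would choose pairwise disjoint closed disks $\overline{D_j}\subset U$, with $\overline{D_j}\subset V$, small enough that $z_j$ is the only zero of $f(\cdot,s_0)$ in $\overline{D_j}$. The compact set $K:=\overline U\setminus\bigcup_j D_j$ lies in $V$ and avoids all zeros of $f(\cdot,s_0)$, so $|f(\cdot,s_0)|$ is bounded below on $K$ and, similarly, on each smooth circle $\partial D_j$. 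A repetition of the compactness argument used for (i), now applied on $K$ and on each $\partial D_j$, yields a (possibly smaller) neighborhood $\mathcal{U}(s_0)\subset\mathcal{U}_0(s_0)$ on which $f(\cdot,s)$ is nonzero on $K$ and satisfies $|f(z,s)-f(z,s_0)|<|f(z,s_0)|$ on every $\partial D_j$. Rouch\'e's theorem applied on each $\partial D_j$ then implies that $f(\cdot,s)$ has exactly $m_j$ zeros in $D_j$, counted with multiplicity. Since $f(\cdot,s)$ has no zeros in $K$ or on $\partial U$, the total number of zeros of $f(\cdot,s)$ in $U$ equals $\sum_{j=1}^k m_j$, which is independent of $s\in\mathcal{U}(s_0)$.

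The main obstacle I anticipate is the possibly wild geometry of $\partial U$: the only hypothesis on $U$ is that it is open, so $\partial U$ need not be a rectifiable curve, and the argument principle cannot be applied directly along $\partial U$. Replacing $\partial U$ by a finite collection of smooth circles $\partial D_j$ surrounding the zeros of $f(\cdot,s_0)$ is precisely the device that bypasses this difficulty. Compactness of $\bar U$ is used twice and is essential: first to guarantee that the zero set of $f(\cdot,s_0)$ in $\bar U$ is finite, and second to promote the pointwise continuity of $f$ in the $s$-variable into uniform lower bounds on the compact sets $\partial U$, $K$, and $\partial D_j$.
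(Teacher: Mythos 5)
The paper does not prove this lemma at all: it is imported verbatim as Theorem 9.17.4 of Dieudonn\'e's \emph{Foundations of Modern Analysis}, so there is no in-paper argument to compare against. Your proof is correct and is in fact essentially the classical one (and Dieudonn\'e's own): a uniform lower bound for $|f(\cdot,s_0)|$ on the compact set $\partial U$ propagated to nearby $s$ by joint continuity, followed by localization of the finitely many zeros into disjoint small disks and an application of Rouch\'e's theorem on each circle $\partial D_j$. Your closing remark about why one must work on circles rather than on $\partial U$ itself is exactly the right observation.

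One small step worth making explicit: the claim that the zero set of $f(\cdot,s_0)$ in $\bar U$ is discrete presupposes that $f(\cdot,s_0)$ does not vanish identically on any connected component of $V$ that meets $U$. This does follow from the hypotheses, but it needs a sentence: if $C$ is such a component with $f(\cdot,s_0)\equiv 0$ on $C$, then $U\cap C$ cannot equal $C$ (a component of an open subset of $\mathbb{C}$ is open and nonempty, hence not compact, while $\bar U$ is compact), so $U\cap C$ has a boundary point inside $C$; that point lies in $\partial U$ and $f$ vanishes there, contradicting $0\notin f(\partial U,s_0)$. With that observation inserted, the argument is complete.
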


By the above lemma, we prove Lemma \ref{lm-BDT-ANA}.

{\bf Proof of Lemma \ref{lm-BDT-ANA}.}
For each $z\in\mathbb{C}$, we write $z={\rm Re}z+{\bf i}{\rm Im}z=:x_{1}+{\bf i}x_{2}$.
Then $f:\mathbb{C}\to \mathbb{C}$ can be seen as a map from $\mathbb{R}^{2}$ to itself.
It is easy to verify that $d_{B}(f,U,z_{0})$ satisfies  (i) in Lemma \ref{lm-BDT-1}.
By the {\it Argument Principle}, the function $d_{B}(f,U,z_{0})$ satisfies (ii) in Lemma \ref{lm-BDT-1}.

Now we prove that $d_{B}(f,U,z_{0})$ also satisfies  (iii) in Lemma \ref{lm-BDT-1}.
Let $\{f_{t}: 0\leq t\leq 1\}$ be a continous homotopy of maps of $\bar{U}$ into $\mathbb{C}$,
and $\{z_{t}: 0\leq t\leq 1\}$ be a continuous curve in $\mathbb{C}$ such that $z_{t}\notin f_{t}(\partial U)$
for each $t\in [0,1]$.
We define a map $\tilde{f}:\mathbb{C} \times [0,1] \to \mathbb{C}$ by
\[
\tilde{f}(\lambda,t)=f_{t}(\lambda)-z_{t}.
\]
Then $0\notin \tilde{f}(\partial U,t)$ for each $0\leq t\leq 1$.
By Lemma \ref{lm-app-1} and the {\it Finite Covering Theorem},
we have that the number of zeros of $\tilde{f}(\lambda,t)$ in the set $U$ is a constant for each $t\in[0,1]$.
So the function satisfies the three properties stated in Lemma \ref{lm-BDT-1}.
Then the proof is finished by Lemma \ref{lm-BDT-2}.
\hfill $\Box$

\section{Properties of limiting operators}
\label{sec-Pro-limt}

In this section, we consider the properties of the limiting operators associated with $\mathcal{L}$.
Assume that the $n\times n$ matrix $D$ is given by
\begin{equation}\label{df-singular-D}
D=\left(
\begin{array}{cc}
I_{m} &  0_{m\times(n-m)}\\
0_{(n-m)\times m} &  0_{n-m}
\end{array}
\right), \ \ \ \ 1\leq m\leq n,
\end{equation}
where $I_{m}$ is the  $m\times m$ identity matrix,
and $0_{m\times(n-m)}$ and $0_{(n-m)\times m}$ denote the $m\times(n-m)$ and $(n-m)\times m$ zero matrices, respectively.
The whole discussion is divided in three different cases according to the term $D$:
{\bf (C1)} $D=I_{n}$, {\bf (C2)} $D={\rm diag}(I_{m}, O_{n-m})$ for $0<m<n$, and {\bf (C3)} $D=O_{n}$.

\subsection{The limiting operator in the first case}
\label{sec-case-1}

Suppose that $D=I_{n}$ in \eqref{df-L}.
Then the operator $\mathcal{L}$ is reduced to $\mathcal{L}^{1}$ of the form
\begin{eqnarray} \label{df-L-1}
\mathcal{L}^{1}p=\partial^{2}_{\xi}p+a_{1}(\xi,\epsilon)\partial_{\xi}p+a_{0}(\xi,\epsilon)p,
\end{eqnarray}
where
$$\mathcal{L}^{1}: H^{2}(\mathbb{R})\subset L^{2}(\mathbb{R}) \to L^{2}(\mathbb{R}).$$
The operator $\mathcal{L}^{1}$ is a closed operator on $L^{2}(\mathbb{R})$ with the domain $\mathcal{D}(\mathcal{L}^{1})=H^{2}(\mathbb{R})$.
We are interested in its eigenvalue problem
\[
\mathcal{L}^{1}p=\lambda p, \ \ \ \ \lambda\in \mathbb{C}.
\]

Let $X=(p,\partial_{\xi}p)^{t}$.
Throughout this paper,
the symbol $^t$ denotes the transpose of a matrix or a vector in the usual Euclidean inner product.
Then the eigenvalue problem $\mathcal{L}^{1}p=\lambda p$ is equivalent to
the following  first-order linear system:
\begin{eqnarray} \label{eq-period-1}
\partial_{\xi}X=A_{1}(\xi,\lambda,\epsilon)X,
\ \ \
A_{1}(\xi,\lambda,\epsilon):=\left(
\begin{array}{cc}
0 & I_{n} \\
\lambda I_{n}-a_{0}(\xi,\epsilon) & -a_{1}(\xi,\epsilon)
\end{array}
\right).
\end{eqnarray}
Note that the parameter $\lambda$ in \eqref{eq-period-1} is related to the spectrum of $\mathcal{L}^{1}$.
For notational convenience, we call $\lambda$ the {\it spectral parameter} of system \eqref{eq-period-1}.

Recall that the matrix functions $a_{j}(\xi,\epsilon)$ satisfy the hypothesis {\bf (H2)}.
In order to analyze the dynamics of system \eqref{eq-period-1},
we first consider the limit to system \eqref{eq-period-1} as $\epsilon\to 0$.
Letting $\epsilon\to 0$ in system \eqref{eq-period-1} yields
\begin{eqnarray} \label{eq-period-1-0}
\partial_{\xi}X=A_{1}(\lambda)X,
\ \ \
A_{1}(\lambda)=\left(
\begin{array}{cc}
0 & I_{n} \\
\lambda I_{n}-a_{0}^{0} & -a_{1}^{0}
\end{array}
\right).
\end{eqnarray}
This system with constant coefficients is said to be the {\it limiting system} of \eqref{eq-period-1}.
We also obtain that this system with constant coefficients is the extended system of
the eigenvalue problem $\mathcal{L}^{1}_{0}p=\lambda p$,
where $\mathcal{L}^{1}_{0}: H^{2}(\mathbb{R})\subset L^{2}(\mathbb{R}) \to L^{2}(\mathbb{R})$ is in the form
\begin{eqnarray*}
\mathcal{L}^{1}_{0}p=\partial^{2}_{\xi}p+a_{1}^{0}\partial_{\xi}p+a_{0}^{0}p, \ \ \ \ \ p\in H^{2}(\mathbb{R}).
\end{eqnarray*}
So we also call $\mathcal{L}^{1}_{0}$ the {\it limiting operator} of $\mathcal{L}^{1}$.

Next we give the conditions under which system \eqref{eq-period-1-0} admits an exponential dichotomy.
\begin{lemma} \label{lm-ED-constant-1}
For a given $\lambda\in\mathbb{C}$, system \eqref{eq-period-1-0}  admits an exponential dichotomy
if and only if
\begin{eqnarray}\label{ED-cond}
\Delta_{1}(\lambda,\mu):={\rm det}\left(\lambda I_{n}-(a_{0}^{0}-\mu^{2}I_{n}+{\bf i}a_{1}^{0}\mu)\right)\neq 0,
\ \ \ \ \mu\in\mathbb{R}.
\end{eqnarray}
\end{lemma}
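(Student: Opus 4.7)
The plan is to reduce the statement to the well known characterization of exponential dichotomies for constant-coefficient systems recalled in Section~\ref{sec-ED}: the system $\partial_{\xi}X=A X$ with $A$ constant admits an exponential dichotomy on $\mathbb{R}$ if and only if $A$ has no eigenvalue on the imaginary axis. Granting this, the lemma becomes the purely algebraic claim that $A_{1}(\lambda)$ has a purely imaginary eigenvalue $\mathbf{i}\mu$ for some $\mu\in\mathbb{R}$ if and only if $\Delta_{1}(\lambda,\mu)=0$ for some such $\mu$.

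To prove this equivalence, I would compute the characteristic polynomial of $A_{1}(\lambda)$ directly. Writing
\begin{eqnarray*}
\nu I_{2n}-A_{1}(\lambda)=\left(\begin{array}{cc} \nu I_{n} & -I_{n} \\ -(\lambda I_{n}-a_{0}^{0}) & \nu I_{n}+a_{1}^{0}\end{array}\right),
\end{eqnarray*}
and using the block determinant identity $\det\bigl(\begin{smallmatrix} A & B \\ C & D\end{smallmatrix}\bigr)=\det(AD-BC)$, which is valid because the top-left block $\nu I_{n}$ is a scalar multiple of the identity and therefore commutes with the bottom-left block, one finds
\begin{eqnarray*}
\det(\nu I_{2n}-A_{1}(\lambda))=\det\bigl(\nu^{2}I_{n}+\nu a_{1}^{0}+a_{0}^{0}-\lambda I_{n}\bigr).
\end{eqnarray*}

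Substituting $\nu=\mathbf{i}\mu$ with $\mu\in\mathbb{R}$ gives $\nu^{2}=-\mu^{2}$, so the above determinant equals $(-1)^{n}\Delta_{1}(\lambda,\mu)$. Hence $A_{1}(\lambda)$ admits a purely imaginary eigenvalue exactly when $\Delta_{1}(\lambda,\mu)$ vanishes for some real $\mu$, and the conclusion follows by contrapositive from the dichotomy criterion for constant coefficients. There is no serious obstacle here; the only point requiring a line of care is the applicability of the block determinant formula, which is immediate from the scalar-times-identity structure of the $(1,1)$ block.
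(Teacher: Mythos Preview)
Your proposal is correct and essentially identical to the paper's own proof: both reduce to the constant-coefficient dichotomy criterion (no purely imaginary eigenvalues of $A_{1}(\lambda)$) and then evaluate the block determinant of $\nu I_{2n}-A_{1}(\lambda)$ at $\nu=\mathbf{i}\mu$ to obtain $\Delta_{1}(\lambda,\mu)$ up to the harmless factor $(-1)^{n}$. The only cosmetic difference is that the paper substitutes $\nu=\mathbf{i}\mu$ from the outset, whereas you first compute the characteristic polynomial in $\nu$ and then specialize.
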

\begin{proof}
It is clear that system \eqref{eq-period-1-0}  admits an exponential dichotomy
if and only if $A_{1}(\lambda)$ has no eigenvalues lying on the imaginary axis,
which is equivalent to
\begin{eqnarray*}
0\neq {\rm det}\left(
\begin{array}{cc}
{\bf i}\mu I_{n}  & -I_{n} \\
a_{0}^{0}-\lambda I_{n} & {\bf i}\mu I_{n} +a_{1}^{0}
\end{array}
\right)
={\rm det}\left(a_{0}^{0}-\mu^{2}I_{n}+{\bf i}a_{1}^{0}\mu-\lambda I_{n}\right)
\end{eqnarray*}
for all $\mu\in\mathbb{R}$.
This finishes the proof.
\end{proof}

By  Lemmas \ref{lm-admis} and \ref{lm-ED-constant-1},
we see that for each $\lambda\not\in \sigma(a_{0}^{0}-\mu^{2}I_{n}+{\bf i}a_{1}^{0}\mu)$ and each $F\in L^{2}(\mathbb{R})$,
there exists a unique solution $X$ in $L^{2}(\mathbb{R}^{2})$ satisfying
$\partial_{\xi}X=A_{1}(\lambda)X+F(\xi)$ and $\|X\|_{2}\leq K\|F\|_{2}$ for some constant $K>0$ independent of $F$.
This implies that for a given $f\in L^{2}(\mathbb{R})$,
equation $(\mathcal{L}^{1}_{0}-\lambda I)p=f$ has a unique solution $p$ in $H^{2}(\mathbb{R})$,
and $(\mathcal{L}^{1}_{0}-\lambda I)^{-1}$ is a bounded operator in $L^{2}(\mathbb{R})$. 
So $\lambda$ is in the resolvent set of $\mathcal{L}^{1}_{0}$.
In fact, we can further obtain that
\begin{eqnarray} \label{df-Sigma-01}
\Sigma_{0}^{1}=\bigcup_{\mu\in\mathbb{R}}\Sigma_{0}^{1}(\mu):=\bigcup_{\mu\in\mathbb{R}} \left\{\lambda\in\mathbb{C}:
{\rm det}\left(\lambda I_{n}-(a_{0}^{0}-\mu^{2}I_{n}+{\bf i}a_{1}^{0}\mu)\right)=0\right\}
\end{eqnarray}
is the spectrum of $\mathcal{L}^{1}_{0}$.

\subsection{The limiting operator in the second case}

Suppose that $D={\rm diag}(I_{m}, O_{n-m})$ ($0<m<n$) in \eqref{df-L}.
The operator $\mathcal{L}$ in this case is denoted by $\mathcal{L}^{2}$.
We split $p$ into two parts $p=(u,w)^{t}\in\mathbb{C}^{m}\times \mathbb{C}^{n-m}$,
and write $a_{1}(\xi,\epsilon)$ and $a_{0}(\xi,\epsilon)$ into
\begin{equation*}
a_{1}(\xi,\epsilon)=\left(
\begin{array}{cc}
a_{11}(\xi,\epsilon) &  a_{12}(\xi,\epsilon)\\
a_{21}(\xi,\epsilon) &  a_{22}(\xi,\epsilon)
\end{array}
\right), \ \ \ \
a_{0}(\xi,\epsilon)=\left(
\begin{array}{cc}
b_{11}(\xi,\epsilon) &  b_{12}(\xi,\epsilon)\\
b_{21}(\xi,\epsilon) &  b_{22}(\xi,\epsilon)
\end{array}
\right),
\end{equation*}
where $a_{11}(\xi,\epsilon)$ and $b_{11}(\xi,\epsilon)$ are $m\times m$ matrices,
and $a_{22}(\xi,\epsilon)$ and $b_{22}(\xi,\epsilon)$ are $(n-m)\times (n-m)$ matrices.
By {\bf (H2)}, the following limits exist:
\[
a_{ij}(\xi,\epsilon)\to a_{ij}^{0},\ \ \ b_{ij}(\xi,\epsilon)\to b_{ij}^{0},\ \ \ \mbox{ as }\ \ \epsilon\to 0.
\]
Here $a_{ij}^{0}$ and $b_{ij}^{0}$ are constant matrices.
Besides the hypotheses {\bf (H1)} and {\bf (H2)},
we further assume an additional hypothesis:
\begin{enumerate}
\item[{\bf (H3)}]
The matrices $a_{ij}$ satisfy
\begin{eqnarray*}
a_{12}(\xi,\epsilon)=0, \  \ a_{22}(\xi,\epsilon)=a_{22}(\epsilon), \ \ \ (\xi,\epsilon)\in \mathbb{R}\times (0,\epsilon_{0}),
\end{eqnarray*}
and $a_{22}^{0}$ and $a_{22}(\epsilon)$ for $\epsilon\in(0,\epsilon_{0})$ are invertible.
\end{enumerate}

The restrictions on $a_{22}$ and $a_{22}^{0}$ are natural for
the linearization of many nonlinear partial differential equations with well-posed conditions,
including the coupled model \eqref{PDE-ODE} as a typical model.
If the coefficient $a_{22}(\epsilon)$ of $\partial_{\xi} w$ is invertible,
then we can normalize the corresponding partial differential equation such that $a_{12}(\xi,\epsilon)=0$.

Let $Y=(u,\partial_{\xi} u,w)^{t}$.
Consider the eigenvalue problem $\mathcal{L}^{2}p=\lambda p$ on the whole real line.
It is equivalent to a first-order linear system of the form
\begin{eqnarray} \label{eq-period-2}
\ \ \ \ \ \partial_{\xi}Y=A_{2}(\xi,\lambda,\epsilon)Y,
\ \ \
A_{2}(\xi,\lambda,\epsilon)=\left(
\begin{array}{ccc}
0 & I_{m} & 0 \\
\lambda I_{m}-b_{11} & -a_{11} & -b_{12}\\
-a_{22}^{-1} b_{21} & -a_{22}^{-1} a_{21} &
a_{22}^{-1}\left(\lambda I_{n-m}-b_{22}\right)
\end{array}
\right),
\end{eqnarray}
where $A_{2}(\xi,\lambda,\epsilon)$ is a $(m+n)\times (m+n)$ matrix function.
For convenience, we drop $(\xi,\epsilon)$  dependence in $A_{2}$.
Letting $\epsilon\to 0$ in system \eqref{eq-period-2} yields
\begin{eqnarray} \label{eq-period-2-0}
\ \ \ \ \ \partial_{\xi}Y=A_{2}(\lambda)Y,
\ \ \
A_{2}(\lambda)=\left(
\begin{array}{ccc}
0 & I_{m} & 0 \\
\lambda I_{m}-b_{11}^{0} & -a_{11}^{0} & -b_{12}^{0}\\
-(a_{22}^{0})^{-1} b_{21}^{0} & -(a_{22}^{0})^{-1} a_{21}^{0} &
(a_{22}^{0})^{-1}\left(\lambda I_{n-m}-b_{22}^{0}\right)
\end{array}
\right).
\end{eqnarray}

Similarly to the first case (see Section \ref{sec-case-1}),
under the hypotheses {\bf (H2)} and {\bf (H3)},
we can obtain that the limiting operator $\mathcal{L}_{0}^{2}$ of $\mathcal{L}^{2}$  is in the form
\begin{eqnarray} \label{df-L-2}
\mathcal{L}_{0}^{2}
\left(
\begin{array}{c}
u \\
w
\end{array}
\right)
:= \left(
\begin{array}{c}
\partial_{\xi}^{2}u+a_{11}^{0}\partial_{\xi} u+b_{11}^{0}u+b_{12}^{0}w \\
a_{21}^{0}\partial_{\xi} u+a_{22}^{0}\partial_{\xi} w+b_{21}^{0}u+b_{22}^{0}w
\end{array}
\right),
\end{eqnarray}
where $\mathcal{L}_{0}^{2}: H^{2}(\mathbb{R})\times H^{1}(\mathbb{R})\subset L^{2}(\mathbb{R})\times L^{2}(\mathbb{R})
\to L^{2}(\mathbb{R})\times L^{2}(\mathbb{R})$.
Similarly to Lemma \ref{lm-ED-constant-1}, we have the following result.
\begin{lemma} \label{lm-ED-constant-2}
Suppose that the hypothesis {\bf (H3)} holds.
Then for a given $\lambda\in \mathbb{C}$, system \eqref{eq-period-2-0} admits an exponential dichotomy
if and only if
\begin{eqnarray}\label{ED-cond-2}
\ \ \ \  \Delta_{2}(\lambda,\mu):=
{\rm det}\left(
\begin{array}{cc}
(\lambda+\mu^{2}) I_{m}-{\bf i}\mu a_{11}^{0}-b_{11}^{0}  & -b_{12}^{0} \\
-(b_{21}^{0}+{\bf i}\mu a_{21}^{0}) & \lambda I_{n-m}-{\bf i}\mu a_{22}^{0}-b_{22}^{0}
\end{array}
\right)
\neq 0,\ \ \ \mu\in\mathbb{R}.
\end{eqnarray}
\end{lemma}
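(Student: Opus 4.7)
The plan is to follow the same strategy as in Lemma \ref{lm-ED-constant-1}. Since system \eqref{eq-period-2-0} has constant coefficients, it admits an exponential dichotomy on $\mathbb{R}$ if and only if the matrix $A_{2}(\lambda)$ has no eigenvalues on the imaginary axis, i.e., $\det(\mathbf{i}\mu I_{m+n} - A_{2}(\lambda)) \neq 0$ for every $\mu \in \mathbb{R}$. The task thus reduces to showing that this scalar determinant equals $\Delta_{2}(\lambda,\mu)$ up to a nonzero multiplicative factor, so that its vanishing is equivalent to the vanishing of $\Delta_{2}$.

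First I would unfold the eigenvalue equation $(\mathbf{i}\mu I_{m+n} - A_{2}(\lambda))(u,v,w)^{t} = 0$ according to the natural block structure of $A_{2}(\lambda)$. The first block row immediately yields $v = \mathbf{i}\mu u$, which eliminates the auxiliary variable $v$ entirely. Substituting this relation into the second block row gives
\[
-(\lambda I_{m} - b_{11}^{0})u + (\mathbf{i}\mu I_{m} + a_{11}^{0})(\mathbf{i}\mu u) + b_{12}^{0} w = 0,
\]
which simplifies, after moving everything to the other side, to
\[
\bigl((\lambda + \mu^{2})I_{m} - \mathbf{i}\mu a_{11}^{0} - b_{11}^{0}\bigr)u - b_{12}^{0} w = 0.
\]
For the third block row, hypothesis (H3) allows me to multiply through by the invertible matrix $a_{22}^{0}$, which changes the determinant only by the nonzero factor $\det a_{22}^{0}$; after substituting $v = \mathbf{i}\mu u$ this row becomes
\[
-(b_{21}^{0} + \mathbf{i}\mu a_{21}^{0}) u + (\lambda I_{n-m} - \mathbf{i}\mu a_{22}^{0} - b_{22}^{0}) w = 0.
\]

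The reduced system in $(u,w)$ therefore has coefficient matrix exactly the $2\times 2$ block matrix whose determinant is $\Delta_{2}(\lambda,\mu)$. Because the substitution $v = \mathbf{i}\mu u$ is a bijection between the nontrivial solutions of the full system and the nontrivial solutions of the reduced one, and because $\det a_{22}^{0} \neq 0$, I obtain the identity $\det(\mathbf{i}\mu I_{m+n} - A_{2}(\lambda)) = (\det a_{22}^{0})^{-1}\,\Delta_{2}(\lambda,\mu)$ (up to sign conventions in expanding the block determinant). Consequently $A_{2}(\lambda)$ has a purely imaginary eigenvalue $\mathbf{i}\mu$ if and only if $\Delta_{2}(\lambda,\mu) = 0$, which is precisely the claim.

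There is no real conceptual obstacle; the only delicate point is the bookkeeping during block substitution, namely the signs and the $-\mu^{2}$ contribution coming from $(\mathbf{i}\mu I_{m}+a_{11}^{0})(\mathbf{i}\mu u)$, as well as the rescaling of the third block row by $a_{22}^{0}$. The invertibility of $a_{22}^{0}$ assumed in (H3) is what makes this rescaling legitimate and, more fundamentally, what permits the second-order problem $\mathcal{L}^{2}_{0}p = \lambda p$ to be recast as the first-order system \eqref{eq-period-2-0} to begin with.
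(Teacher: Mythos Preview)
Your proposal is correct and follows essentially the same approach as the paper: both reduce to showing that $\det(\mathbf{i}\mu I_{m+n}-A_{2}(\lambda))$ vanishes exactly when $\Delta_{2}(\lambda,\mu)$ does, the paper via explicit block row/column operations on the determinant and you via the equivalent kernel substitution $v=\mathbf{i}\mu u$ together with left-multiplication of the third block row by $a_{22}^{0}$. The paper obtains the precise identity $\det(\mathbf{i}\mu I_{m+n}-A_{2}(\lambda))=(-1)^{n}(\det a_{22}^{0})^{-1}\Delta_{2}(\lambda,\mu)$, which accounts for the sign you left implicit, but this factor is nonzero by {\bf (H3)} and hence immaterial for the equivalence.
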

\begin{proof}
By a direct computation, we have
\begin{eqnarray*}
&&\lefteqn{{\rm det}({\bf i}\mu I_{m+n}-A_{2}(\lambda))}\\
&&=
{\rm det}\left(
\begin{array}{ccc}
{\bf i}\mu I_{m} & -I_{m} & 0 \\
-\lambda I_{m}+b_{11}^{0} & {\bf i}\mu I_{m}+a_{11}^{0} & b_{12}^{0}\\
(a_{22}^{0})^{-1} b_{21}^{0} & (a_{22}^{0})^{-1} a_{21}^{0} &
{\bf i}\mu I_{n-m}-(a_{22}^{0})^{-1}\left(\lambda I_{n-m}-b_{22}^{0}\right)
\end{array}
\right)\\
&&=
{\rm det}\left(
\begin{array}{ccc}
I_{m} & 0 &  0 \\
-{\bf i}\mu I_{m}-a_{11}^{0} & {\bf i}\mu a_{11}^{0}+b_{11}^{0}-(\lambda+\mu^{2}) I_{m} &  b_{12}^{0}\\
-(a_{22}^{0})^{-1} a_{21}^{0} & (a_{22}^{0})^{-1} b_{21}^{0}+{\bf i}\mu(a_{22}^{0})^{-1} a_{21}^{0} &
{\bf i}\mu I_{n-m}-(a_{22}^{0})^{-1}\left(\lambda I_{n-m}-b_{22}^{0}\right)
\end{array}
\right)\\
&&=
(-1)^{n}({\rm det}(a_{22}^{0}))^{-1}\Delta_{2}(\lambda,\mu).
\end{eqnarray*}
Therefore, the last equality together with {\bf (H3)} yields this lemma.
\end{proof}

Similarly to the definition of the spectrum $\Sigma_{0}^{1}$ for $\mathcal{L}^{1}_{0}$,
and by  Lemmas \ref{lm-admis} and \ref{lm-ED-constant-2},
we define the spectrum $\Sigma_{0}^{2}$ of $\mathcal{L}^{2}_{0}$ by
\begin{eqnarray}\label{df-Sigma-02}
\Sigma_{0}^{2}=\bigcup_{\mu\in\mathbb{R}}\Sigma_{0}^{2}(\mu)
    :=\bigcup_{\mu\in\mathbb{R}} \left\{\lambda\in\mathbb{C}: \Delta_{2}(\lambda,\mu)=0\right\}.
\end{eqnarray}

\subsection{The limiting operator in the third case}

Suppose that $D=O_{n}$ in \eqref{df-L} and $a_{1}(\xi,\epsilon)=a_{1}(\epsilon)$.
Then the operator $\mathcal{L}$ is reduced to $\mathcal{L}^{3}$ of the form
\begin{eqnarray} \label{df-L-3}
\mathcal{L}^{3}p=a_{1}(\epsilon)\partial_{\xi}p+a_{0}(\xi,\epsilon)p,
\end{eqnarray}
where
$$\mathcal{L}^{3}: H^{1}(\mathbb{R})\subset L^{2}(\mathbb{R}) \to L^{2}(\mathbb{R}).$$
Besides the hypotheses {\bf (H1)} and {\bf (H2)},
we further assume an additional hypothesis:
\begin{enumerate}
\item[{\bf (H4)}]
The matrices $a_{1}^{0}$ and $a_{1}(\epsilon)$ for $\epsilon\in(0,\epsilon_{0})$ are invertible .
\end{enumerate}

The hypothesis {\bf (H4)} is  natural for many evolutionary systems with well-posed conditions.
We write the eigenvalue problem $\mathcal{L}^{3}p=\lambda p$ as
\begin{eqnarray*}\label{eq-period-3}
\partial_{\xi}p=A_{3}(\xi,\lambda,\epsilon)p,
\ \ \ \ \
A_{3}(\xi,\lambda,\epsilon)=(a_{1}(\epsilon))^{-1}(\lambda I_{n}-a_{0}(\xi,\epsilon)).
\end{eqnarray*}
The limiting system and the limiting operator $\mathcal{L}_{3}^{0}$ are
\begin{eqnarray}  \label{eq-period-3-0}
\partial_{\xi}p=A_{3}(\lambda)p,
\ \ \ \ \
A_{3}(\lambda)=(a_{1}^{0})^{-1}(\lambda I_{n}-a_{0}^{0}),
\end{eqnarray}
and
\begin{eqnarray*}
\mathcal{L}^{3}_{0}p=a_{1}^{0}\partial_{\xi}p+a_{0}^{0}p,
\ \ \ \ \ p\in H^{1}(\mathbb{R}),
\end{eqnarray*}
respectively.
Similarly to Lemmas \ref{lm-ED-constant-1} and \ref{lm-ED-constant-2},
we have the following result.
\begin{lemma} \label{lm-ED-constant-3}
Suppose that the hypothesis {\bf (H4)} holds. Then
for a given $\lambda\in \mathbb{C}$, system \eqref{eq-period-3-0} admits an exponential dichotomy
if and only if
\begin{eqnarray}\label{ED-cond-3}
\Delta_{3}(\lambda,\mu):=
{\rm det}\left(
\lambda I_{n}-(a_{0}^{0}+{\bf i}\mu a_{1}^{0})
\right)
\neq 0, \ \ \ \ \mu\in\mathbb{R}.
\end{eqnarray}
\end{lemma}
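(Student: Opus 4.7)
The plan is to follow the same template as Lemmas \ref{lm-ED-constant-1} and \ref{lm-ED-constant-2}: since \eqref{eq-period-3-0} has constant coefficients, the standard characterization recalled after Lemma \ref{lm-admis} says it admits an exponential dichotomy on $\mathbb{R}$ if and only if the constant matrix $A_{3}(\lambda)=(a_{1}^{0})^{-1}(\lambda I_{n}-a_{0}^{0})$ has no purely imaginary eigenvalues, i.e.
\[
\det\bigl({\bf i}\mu I_{n}-A_{3}(\lambda)\bigr)\neq 0 \quad\text{for all } \mu\in\mathbb{R}.
\]

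The main step is then to rewrite this condition in the form \eqref{ED-cond-3}. First I would use hypothesis {\bf (H4)} to left-multiply inside the determinant by $a_{1}^{0}$: using multiplicativity,
\[
\det(a_{1}^{0})\cdot\det\bigl({\bf i}\mu I_{n}-A_{3}(\lambda)\bigr)
=\det\bigl({\bf i}\mu a_{1}^{0}-(\lambda I_{n}-a_{0}^{0})\bigr)
=(-1)^{n}\det\bigl(\lambda I_{n}-(a_{0}^{0}+{\bf i}\mu a_{1}^{0})\bigr)
=(-1)^{n}\Delta_{3}(\lambda,\mu).
\]
Since $a_{1}^{0}$ is invertible by {\bf (H4)}, $\det(a_{1}^{0})\neq 0$, so the non-vanishing of $\det({\bf i}\mu I_{n}-A_{3}(\lambda))$ for all real $\mu$ is equivalent to the non-vanishing of $\Delta_{3}(\lambda,\mu)$ for all real $\mu$. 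Combining the two equivalences yields the lemma.

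There is essentially no difficulty here; the only subtlety, which is where I would spend one sentence of care, is that the invertibility hypothesis on $a_{1}^{0}$ is precisely what allows the reduction of the first-order system $\partial_\xi p = A_3(\lambda) p$ to a constant-coefficient ODE in the first place (so that the spectrum criterion from the preliminaries applies), and also what makes the determinant manipulation above reversible. No roughness or Brouwer degree is needed for this statement; those tools enter only later when passing from $\mathcal{L}_{0}^{3}$ to $\mathcal{L}^{3}$.
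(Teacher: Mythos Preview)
Your proof is correct and matches the paper's approach: the paper does not write out a separate proof for this lemma, simply noting that it follows ``similarly to Lemmas \ref{lm-ED-constant-1} and \ref{lm-ED-constant-2}'', which is exactly the template you carry out. The only tiny inaccuracy is the cross-reference---the constant-coefficient exponential dichotomy criterion is recalled in the paragraph following the definition of exponential dichotomy (before Lemma \ref{lm-Period}), not after Lemma \ref{lm-admis}---but this does not affect the argument.
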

Finally, we define the spectrum $\Sigma_{0}^{3}$ of $\mathcal{L}^{3}_{0}$ by
\begin{eqnarray} \label{df-Sigma-03}
\Sigma_{0}^{3}=\bigcup_{\mu\in\mathbb{R}}\Sigma_{0}^{3}(\mu)
    :=\bigcup_{\mu\in\mathbb{R}} \left\{\lambda\in\mathbb{C}: \Delta_{3}(\lambda,\mu)=0\right\}.
\end{eqnarray}

\subsection{The spectra for the limiting operators}

Although the spectra of  $\mathcal{L}^{i}_{0}$ ($i=1,2,3$) contain purely essential spectra,
we define the {\it generalized algebraic multiplicity} $m_{ga}(\lambda_{0})$ of
the spectral point $\lambda_{0}\in \Sigma_{0}^{i}$ ($i=1,2,3$) in the following way.
For each $\lambda_{0}\in \Sigma_{0}^{i}$,
$$
\Delta_{i}(\lambda_{0},\mu)=0
$$
is an algebraic equation in $\mu$.
Let $\mu_{1}(\lambda_{0}), ..., \mu_{m_{0}}(\lambda_{0})$ denote all real roots of this equation.
Then the {\it generalized algebraic multiplicity} $m_{ga}(\lambda_{0})$ of $\lambda_{0}$ is given by
\begin{eqnarray}\label{df-gam}
m_{ga}(\lambda_{0})=\sum_{j=1}^{m_{0}} m_{a}(\lambda_{0},\mu_{j}(\lambda_{0})),
\end{eqnarray}
where $m_{a}(\lambda_{0},\mu_{j}(\lambda_{0}))$ are the orders of the zero $\lambda_{0}$
for the equations
\begin{eqnarray*}
\Delta_{i}(\lambda,\mu_{j}(\lambda_{0}))=0, \ \ \ i=1,2,3, \ \ j=1,...,m_{0}.
\end{eqnarray*}
Next we summarize the properties for the spectra $\Sigma_{0}^{i}$.
\begin{lemma} \label{lm-zero-spect}
The spectra $\Sigma_{0}^{i}$ ($i=1,2,3$) has  the following properties:
\begin{enumerate}
\item[(i)]
There are $n$ continuous functions $\lambda_{j}(\mu)$ for $j=1,2,..,n$ and $\mu\in\mathbb{R}$
such that
\begin{eqnarray*}
\Sigma_{0}^{i}:=\left\{\lambda\in\mathbb{C}:
\lambda=\lambda_{j}(\mu),\ j=1,2,...,n,\ \mu\in\mathbb{R} \right\}.
\end{eqnarray*}

\item[(ii)]
If the matrix $a_{0}^{0}$ has eigenvalues with positive real part,
then
\[
\Sigma_{0}^{i}\bigcap \left\{{\lambda\in\mathbb{C}:{\rm Re}\lambda>0}\right\}
\neq \varnothing.
\]
\end{enumerate}
\end{lemma}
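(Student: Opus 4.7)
The plan is to treat the three spectral determinants $\Delta_{1},\Delta_{2},\Delta_{3}$ uniformly, exploiting the fact that in each case $\Delta_{i}(\lambda,\mu)$ is a monic polynomial in $\lambda$ of degree exactly $n$ with coefficients that are polynomial (hence continuous) in the real parameter $\mu$. For Cases 1 and 3 this is immediate, since $\Delta_{i}(\lambda,\mu)$ is literally the characteristic polynomial of an $n\times n$ matrix depending on $\mu$. For Case 2 I would expand the block determinant defining $\Delta_{2}$ and note that the leading $\lambda$-term comes from the product of the diagonal blocks $(\lambda+\mu^{2})I_{m}$ and $\lambda I_{n-m}$, giving $\lambda^{n}$ with coefficient $1$.

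For part (i), once the monic-polynomial structure of fixed degree $n$ is in place, the plan is to invoke the classical continuous dependence of the roots of a polynomial on its coefficients. Given a continuous one-parameter family of monic polynomials of fixed degree, the $n$ roots (counted with multiplicity) admit a global continuous labeling over $\mu\in\mathbb{R}$: since the parameter space is simply connected and one-dimensional, no monodromy obstructs matching roots across the branch points at which they coalesce. This can be made rigorous via Rouch\'e's theorem applied on small contours around each root, or equivalently through Lemma \ref{lm-app-1} already recorded in Section \ref{sec-ED}. Denoting these branches by $\lambda_{1}(\mu),\ldots,\lambda_{n}(\mu)$, the union $\bigcup_{\mu\in\mathbb{R}}\bigcup_{j=1}^{n}\{\lambda_{j}(\mu)\}$ coincides with $\Sigma_{0}^{i}$ by the very definitions \eqref{df-Sigma-01}, \eqref{df-Sigma-02} and \eqref{df-Sigma-03}.

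For part (ii), the approach is a direct substitution $\mu=0$. In Cases 1 and 3 this reduces $\Delta_{i}(\lambda,0)$ to the characteristic polynomial $\det(\lambda I_{n}-a_{0}^{0})$. In Case 2, writing $a_{0}^{0}$ as the $n\times n$ block matrix with entries $b_{jk}^{0}$, a block-matrix identification gives
\[
\Delta_{2}(\lambda,0)=\det\!\begin{pmatrix} \lambda I_{m}-b_{11}^{0} & -b_{12}^{0}\\ -b_{21}^{0} & \lambda I_{n-m}-b_{22}^{0} \end{pmatrix}=\det(\lambda I_{n}-a_{0}^{0}).
\]
Hence $\sigma(a_{0}^{0})\subset\Sigma_{0}^{i}$ in all three cases, and any eigenvalue of $a_{0}^{0}$ with positive real part, whose existence is the hypothesis of (ii), furnishes a point of $\Sigma_{0}^{i}\cap\{\lambda\in\mathbb{C}:{\rm Re}\,\lambda>0\}$.

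The only genuinely nontrivial step is the continuous global labeling of polynomial roots invoked in part (i); the one-dimensional, simply connected nature of $\mathbb{R}$ is precisely what rescues the argument from the monodromy obstruction one would face over a general parameter space. Once this labeling is granted, part (ii) is essentially a substitution together with a block-determinant identity.
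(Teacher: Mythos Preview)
Your proposal is correct and follows essentially the same approach as the paper: the paper likewise writes each $\Delta_{i}(\lambda,\mu)$ as a monic degree-$n$ polynomial in $\lambda$ with coefficients continuous in $\mu$, invokes Rouch\'e to obtain continuous root branches $\lambda_{j}(\mu)$, and then sets $\mu=0$ to identify $\Delta_{i}(\lambda,0)=\det(\lambda I_{n}-a_{0}^{0})$ for part (ii). Your discussion of global labeling over the simply connected parameter line and the explicit block-determinant check in Case~2 are refinements the paper leaves implicit, but the core argument is the same.
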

\begin{proof}
We write each $\Delta_{i}(\lambda,\mu)$ ($i=1,2,3$) as
\begin{eqnarray*}
\Delta_{i}(\lambda,\mu)=\lambda^{n}+\kappa_{n-1}(\mu)\lambda^{n-1}+\cdot\cdot\cdot+\kappa_{1}(\mu)\lambda^{1}+\kappa_{0}(\mu),
\end{eqnarray*}
where $\kappa_{j}$ are continuous in $\mu$ and determined by the elements of the matrices $a_{0}^{0}$ and $a_{1}^{0}$.
Note that for each $\mu\in\mathbb{R}$,
the equation $\Delta_{i}(\lambda,\mu)=0$ has $n$ complex roots $\lambda_{j}(\mu)$ for $j=1,2,...,n$.
By the  Rouch\'e Theorem \cite[p.247]{Dieudonne-69} and continuity of the functions $\kappa_{j}$,
we can further conclude that $\lambda_{j}(\mu)$  are continuous in $\mu$.
This finishes the proof for (i).

Letting $\mu=0$ in $\Delta_{i}(\lambda,\mu)=0$ yields that
\begin{eqnarray*}
\Delta_{i}(\lambda,0)={\rm det}(\lambda I_{n}-a_{0}^{0}).
\end{eqnarray*}
This implies that $\lambda_{j}(0)$, $j=1,2,...,n$, are all eigenvalues of the matrix $a_{0}^{0}$.
Since the matrix $a_{0}^{0}$ has eigenvalues with positive real parts,
there exist some $\lambda_{j}(0)$ with ${\rm Re}\lambda_{j}(0)>0$.
We finish the proof for  (ii) by continuity of $\lambda_{j}$.
Therefore, the proof is now complete.
\end{proof}

\section{Perturbation theory for the spectra}
\label{sec-pert}

In this section, we consider the perturbation of the spectrum for the linear operator $\mathcal{L}$ (see \eqref{df-L}).
Recall that the spectral problem $\mathcal{L} p=\lambda p$ is equivalent to the following system
\begin{eqnarray} \label{eq-spect}
\partial_{\xi}Y=A(\xi,\lambda,\epsilon)Y,
\end{eqnarray}
where
$A(\xi,\lambda,\epsilon)=A_{i}(\xi,\lambda,\epsilon)$ for $i=1,2,3,4$,
and $A_{i}(\xi,\lambda,\epsilon)$ ($i=1,2,3$) are defined as in
\eqref{eq-period-1}, \eqref{eq-period-2} and \eqref{eq-period-3}, respectively.

Note that $A(\xi,\lambda,\epsilon)=A(\xi+T_{\epsilon},\lambda,\epsilon)$.
By Lemma \ref{lm-admis}, the spectrum $\sigma(\mathcal{L})$ of $\mathcal{L}$ has purely essential spectrum.
See \cite{Kapitula-Promislow-13,Sandstede-02} for the detailed proof.
Furthermore, $\lambda$ is in $\sigma(\mathcal{L})$ if and only if
there exists a  $\mu$ in $(-\pi/T_{\epsilon},\pi/T_{\epsilon}]$ such that
\eqref{eq-spect} has a nonzero solution $Y$ satisfying
\begin{eqnarray}\label{BVD}
Y(T_{\epsilon})=e^{{\bf i}\mu T_{\epsilon}}Y(0).
\end{eqnarray}
We refer the detailed discussion in \cite[pp. 68-69]{Kapitula-Promislow-13}.
By a change of variables
\[
Y(\xi)\to Y(\xi)e^{{\bf i}\mu \xi},
\]
we transform system \eqref{eq-spect} with the boundary value condition \eqref{BVD} into
\begin{eqnarray} \label{eq-evans-2}
\partial_{\xi}Y=(A(\xi,\lambda,\epsilon)-{\bf i}\mu I)Y,
\end{eqnarray}
which is subject to the boundary value condition
$$Y(T_{\epsilon})=Y(0).$$
Throughout this section, we always use $I$ to denote the identity matrix $I_{2n}$, $I_{m+n}$ or $I_{n}$.

Let $\Psi(\xi,\lambda,\mu,\epsilon)$ be the principal fundamental matrix solution of the periodic system \eqref{eq-evans-2},
and $E(\lambda,\mu,\epsilon)$ be defined by
\begin{eqnarray*}
E(\lambda,\mu,\epsilon)={\rm det}(\Psi(T_{\epsilon},\lambda,\mu,\epsilon)-I)=0, \ \ \
 (\lambda,\mu,\epsilon)\in \mathbb{C}\times (-\pi/T_{\epsilon},\pi/T_{\epsilon}]\times (0,\epsilon_{0}).
\end{eqnarray*}
The function $E(\lambda,\mu,\epsilon)$ for each $\epsilon\in(0,\epsilon_{0})$
is called the {\it Evans function} of the eigenvalue problem $\mathcal{L}p=\lambda p$
(see \cite{Gardner-93,Kapitula-Promislow-13,Sandstede-02}).
By Lemmas 8.4.1 and 8.4.2 in \cite[p.242]{Kapitula-Promislow-13}, we have the next lemma.
\begin{lemma}\label{lm-spect-0}
For a given $\epsilon\in(0,\epsilon_{0})$,
the spectral parameter $\lambda$ is in $\sigma (\mathcal{L})$ if and only if
there exists a constant $\mu \in\mathbb{R}$ such that
$E(\lambda,\mu,\epsilon)=0$.
\end{lemma}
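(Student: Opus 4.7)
The plan is to reduce the statement to the Floquet-theoretic characterization of the essential spectrum already recalled in the text, and then identify the nontrivial periodic solutions of the rescaled system \eqref{eq-evans-2} with the zeros of the determinant $E(\lambda,\mu,\epsilon)$. First, since $A(\xi,\lambda,\epsilon)$ is $T_{\epsilon}$-periodic in $\xi$, the Floquet representation \eqref{Floq} combined with Lemma \ref{lm-admis} implies that $\sigma(\mathcal{L})$ consists purely of essential spectrum, and that $\lambda\in\sigma(\mathcal{L})$ if and only if system \eqref{eq-spect} possesses a nontrivial bounded solution of Floquet type $Y(\xi)=e^{{\bf i}\mu\xi}\tilde{Y}(\xi)$ with $\tilde{Y}$ a $T_{\epsilon}$-periodic function and $\mu\in(-\pi/T_{\epsilon},\pi/T_{\epsilon}]$. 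This is precisely the content of the discussion preceding \eqref{eq-evans-2}, which we may cite from Chapter 8 of \cite{Kapitula-Promislow-13}.

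Second, I would apply the change of variables $Y(\xi)\mapsto e^{{\bf i}\mu\xi}Y(\xi)$ already indicated in the text, so that the problem becomes the existence of a nontrivial $T_{\epsilon}$-periodic solution of the rescaled system \eqref{eq-evans-2}. By the elementary theory of linear ODEs, the two-point boundary value problem $Y(T_{\epsilon})=Y(0)$ associated with \eqref{eq-evans-2} admits a nontrivial solution if and only if the matrix $\Psi(T_{\epsilon},\lambda,\mu,\epsilon)-I$ is singular, i.e., $E(\lambda,\mu,\epsilon)=0$. Combining this with step one gives the equivalence for $\mu$ restricted to the fundamental interval $(-\pi/T_{\epsilon},\pi/T_{\epsilon}]$.

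Third, I need to upgrade the parameter range from $(-\pi/T_{\epsilon},\pi/T_{\epsilon}]$ to all of $\mathbb{R}$, as stated in the lemma. A short calculation shows that $E(\lambda,\mu,\epsilon)$ is periodic in $\mu$ with period $2\pi/T_{\epsilon}$: replacing $\mu$ by $\mu+2\pi/T_{\epsilon}$ in \eqref{eq-evans-2} multiplies the fundamental matrix $\Psi(\xi,\lambda,\mu,\epsilon)$ by $e^{-2\pi{\bf i}\xi/T_{\epsilon}}$, which equals $1$ at $\xi=T_{\epsilon}$, so the monodromy is unchanged. Therefore the existence of some $\mu\in\mathbb{R}$ with $E(\lambda,\mu,\epsilon)=0$ is equivalent to the existence of such $\mu$ in the fundamental interval, and by the previous two steps this is equivalent to $\lambda\in\sigma(\mathcal{L})$.

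I expect the main subtlety to lie in step one: carefully justifying that $\sigma(\mathcal{L})$ is purely essential and that every spectral point carries a bounded Floquet eigensolution rather than a genuine $L^{2}$ eigenfunction. This rests on the fact that, once $\mu$ is allowed to vary, the monodromy acquires continuous spectrum on the unit circle, and it is precisely this structural fact that is imported from Lemmas 8.4.1 and 8.4.2 of \cite{Kapitula-Promislow-13}. The remaining steps are purely algebraic manipulations with the fundamental matrix.
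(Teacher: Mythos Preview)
Your proposal is correct and follows essentially the same approach as the paper: the paper simply cites Lemmas 8.4.1 and 8.4.2 of \cite{Kapitula-Promislow-13} for this result, and your three steps (Floquet characterization of $\sigma(\mathcal{L})$, reduction to the periodic boundary value problem via the substitution $Y\mapsto e^{{\bf i}\mu\xi}Y$, and the $2\pi/T_{\epsilon}$-periodicity of $E$ in $\mu$) are precisely the content of that citation together with the discussion the paper already gives around \eqref{BVD}--\eqref{eq-evans-2}. Your write-up is more explicit than the paper's one-line citation, but the route is the same.
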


In order to consider the zeros of $E(\cdot,\cdot,\epsilon)$ for  $\epsilon\in(0,\epsilon_{0})$,
we  begin by studying an auxiliary perturbed system
\begin{eqnarray} \label{eq-period-1-pt}
\partial_{\xi}Y=\left(A(\lambda)-{\bf i}\mu I+sB(\xi,\epsilon)\right)Y,\ \ \ \ \xi\in\mathbb{R},
\end{eqnarray}
where  $s\in[0,1]$ and $\lambda\in\mathbb{C}$.
According to the form of $D$ in \eqref{df-L},
the matrix $A(\lambda)$ is defined by
\begin{eqnarray*}
A(\lambda)=\left\{
\begin{aligned}
&A_{1}(\lambda), && \mbox{ if }\ D=I_{n},\\
&A_{2}(\lambda), && \mbox{ if }\ D={\rm diag}(I_{m},O_{n-m}),\\
&A_{3}(\lambda), && \mbox{ if }\ D=O_{n},
\end{aligned}
\right.
\end{eqnarray*}
and the corresponding $B(\xi,\epsilon)$ is given by
\begin{eqnarray*}
B(\xi,\epsilon)=A_{i}(\xi,\lambda,\epsilon)-A_{i}(\lambda), \ \ \ \ i=1,2,3.
\end{eqnarray*}

When $s=0$,
system \eqref{eq-period-1-pt} is reduced to
\begin{eqnarray} \label{eq-evans-1}
\partial_{\xi}Y=\left(A(\lambda)-{\bf i}\mu I\right)Y.
\end{eqnarray}
Define $\mathcal{B}: (0,\epsilon_{0})\to \mathbb{R}^{+}$  by
\[
\mathcal{B}(\epsilon)=\max_{\xi\in[0,T_{\epsilon}]}|B(\xi,\epsilon)|.
\]
By {\bf (H2)}, we have $\mathcal{B}(\epsilon)=O(\epsilon)$ for $\epsilon\in (0,\epsilon_{0})$.
We see that  \eqref{eq-period-1-pt} is a perturbed system of \eqref{eq-evans-1}.
By  the preceding discussion,
this unperturbed system  \eqref{eq-period-1-pt} loses exponential dichotomies at the zeros of the following function:
\begin{eqnarray*}
\Delta(\lambda,\mu)=\left\{
\begin{aligned}
&\Delta_{1}(\lambda,\mu), && \mbox{ if }\ D=I_{n},\\
&\Delta_{2}(\lambda,\mu), && \mbox{ if }\ D={\rm diag}(I_{m},O_{n-m}),\\
&\Delta_{3}(\lambda,\mu), && \mbox{ if }\ D=O_{n},
\end{aligned}
\right.
\end{eqnarray*}
where $\Delta_{1}(\lambda,\mu)$, $\Delta_{2}(\lambda,\mu)$ and $\Delta_{3}(\lambda,\mu)$
are defined by \eqref{ED-cond}, \eqref{ED-cond-2} and \eqref{ED-cond-3}, respectively.

Similarly to $E$, we define $\mathcal{E}$ by
\begin{eqnarray}\label{df-F-mu}
\mathcal{E}(s,\lambda,\mu,\epsilon)=
{\rm det}\left(\Phi(T_{\epsilon},s,\lambda,\mu,\epsilon)- I\right),\ \ \
(s,\lambda,\mu,\epsilon)\in [0,1]\times \mathbb{C}\times \mathbb{R}\times (0,\epsilon_{0}),
\end{eqnarray}
where $\Phi(\cdot,s,\lambda,\mu,\epsilon)$ is
the principal fundamental matrix solution of \eqref{eq-period-1-pt}
for each $(s,\lambda,\mu,\epsilon)\in [0,1]\times \mathbb{C}\times \mathbb{R}\times (0,\epsilon_{0})$.

\begin{lemma}\label{lm-spect}
The function $\mathcal{E}$ has the following properties:
\begin{enumerate}
\item[(i)]
$\mathcal{E}(1,\lambda,\mu,\epsilon)=E(\lambda,\mu,\epsilon)$.

\item[(ii)]
For each $(s,\lambda,\epsilon)\in [0,1]\times \mathbb{C}\times (0,\epsilon_{0})$,
the function $\mathcal{E}(s,\lambda,\cdot,\epsilon)$ has period $2\pi/T_{\epsilon}$,
where $T_{\epsilon}$ is defined as in {\bf (H1)}.

\item[(iii)] $\mathcal{E}(0,\lambda_{j}(\mu),\mu,\epsilon)=0$ for each $\mu\in\mathbb{R}$
and $j=1,2,...,n$, where $\lambda_{j}(\mu)$ are defined as
in Lemma \ref{lm-zero-spect}.
\end{enumerate}
\end{lemma}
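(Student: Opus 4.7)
The plan is to dispatch the three assertions separately, each being an almost immediate consequence of unpacking the definitions of $\mathcal{E}$ and $\Phi$.

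For part (i), I would observe that at $s=1$ the coefficient matrix of \eqref{eq-period-1-pt} is $A(\lambda) - {\bf i}\mu I + B(\xi,\epsilon)$, and by the very definition of $B$ one has $A(\lambda) + B(\xi,\epsilon) = A_{i}(\xi,\lambda,\epsilon)$. Hence \eqref{eq-period-1-pt} at $s=1$ coincides with \eqref{eq-evans-2}, so the principal fundamental matrices agree and therefore so do $\mathcal{E}(1,\lambda,\mu,\epsilon)$ and $E(\lambda,\mu,\epsilon)$ by the definitions of these determinants.

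For part (ii), the plan is to use a gauge transformation. I would set $\Phi_{*}(\xi) := e^{-2\pi{\bf i}\xi/T_{\epsilon}}\Phi(\xi,s,\lambda,\mu,\epsilon)$ and check directly from \eqref{eq-period-1-pt} that $\Phi_{*}$ solves the same equation with $\mu$ replaced by $\mu + 2\pi/T_{\epsilon}$, while still satisfying $\Phi_{*}(0) = I$. Uniqueness of the principal fundamental matrix then forces $\Phi_{*} = \Phi(\cdot,s,\lambda,\mu + 2\pi/T_{\epsilon},\epsilon)$. Evaluating at $\xi = T_{\epsilon}$, the scalar factor $e^{-2\pi{\bf i}}$ equals $1$, so the monodromy matrix is invariant under $\mu \mapsto \mu + 2\pi/T_{\epsilon}$, and the claimed periodicity of $\mathcal{E}(s,\lambda,\cdot,\epsilon)$ follows.

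For part (iii), at $s=0$ the system \eqref{eq-period-1-pt} has constant coefficients, so $\Phi(\xi,0,\lambda,\mu,\epsilon) = \exp((A(\lambda) - {\bf i}\mu I)\xi)$. Hence $\mathcal{E}(0,\lambda,\mu,\epsilon) = 0$ precisely when $A(\lambda)$ has an eigenvalue of the form ${\bf i}\mu + 2\pi{\bf i} k/T_{\epsilon}$ for some $k \in \mathbb{Z}$. The case-by-case characteristic-polynomial identities that were actually computed in the proofs of Lemmas \ref{lm-ED-constant-1}--\ref{lm-ED-constant-3} give, in each of the three structural cases, $\det({\bf i}\mu I - A(\lambda)) = c\cdot \Delta_{i}(\lambda,\mu)$ with $c \neq 0$. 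By Lemma \ref{lm-zero-spect} the functions $\lambda_{j}(\mu)$ are exactly the $\lambda$-roots of $\Delta_{i}(\cdot,\mu)$, so $A(\lambda_{j}(\mu))$ has ${\bf i}\mu$ as an eigenvalue (the $k=0$ case), and the determinant defining $\mathcal{E}$ therefore vanishes at $\lambda_{j}(\mu)$. The only mild obstacle is bookkeeping across the three block structures of $A_{1}$, $A_{2}$, $A_{3}$ when invoking these identities, but since those computations are already in hand from the earlier lemmas, no new calculation is required.
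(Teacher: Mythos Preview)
Your proposal is correct and follows essentially the same approach as the paper. For (ii) the paper factors the entire $\mu$-dependence out as a scalar exponential, writing $\mathcal{E}(s,\lambda,\mu,\epsilon)=\det\bigl(\Phi(T_{\epsilon},s,\lambda,0,\epsilon)e^{{\bf i}\mu T_{\epsilon}}-I\bigr)$, from which the $2\pi/T_{\epsilon}$-periodicity is immediate; your gauge-transformation argument is an equivalent repackaging of the same computation, and for (i) and (iii) your reasoning matches the paper's line for line.
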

\begin{proof}
Letting $s=1$ in \eqref{eq-period-1-pt}, we have (i).
By a direct computation,
\begin{eqnarray*}
\mathcal{E}(s,\lambda,\mu,\epsilon)=
{\rm det}\left(\Phi(T_{\epsilon},s,\lambda,0,\epsilon)\exp({\bf i}\mu T_{\epsilon})-I_{2n}\right).
\end{eqnarray*}
This yields (ii).
Finally, we prove (iii). Note that  $\Delta(\lambda_{j}(\mu),\mu)=0$.
Then $(A(\lambda_{j}(\mu))-{\bf i}\mu I_{n})$ has zero as an eigenvalue,
which  yields (iii). Then the proof is finished.
\end{proof}

We remark that the graphs of  $\lambda_{j}(\cdot)$ are called the spectral curves
of the limiting operators $\mathcal{L}_{0}^{i}$ ($i=1,2,3$).
Now we prove a lemma which plays an important role in the subsequent proof.

\begin{lemma}\label{lm-zero-1}
Suppose that $\mu_{0}$ in $\mathbb{R}$ satisfies $\Sigma^{i}_{0}(\mu_{0})\neq \varnothing$ for $i=1,2,3$,
where $\Sigma^{i}_{0}(\mu_{0})$ for $i=1,2,3$ are defined in \eqref{df-Sigma-01}, \eqref{df-Sigma-02} and  \eqref{df-Sigma-03},
respectively.
Let $U(\lambda_{j}(\mu_{0}))$ be a bounded open neighborhood of  $\lambda_{j}(\mu_{0})$ in $\mathbb{C}$
such that $\lambda_{j}(\mu_{0})$ is the only point of $\Sigma^{i}_{0}(\mu_{0})$  in $U(\lambda_{j}(\mu_{0}))$,
and all eigenvalues $\alpha_{j}(\lambda)+{\bf i}\beta_{j}(\lambda)$ of  $A(\lambda)$ satisfy
\begin{eqnarray}\label{condt-U-j}
0<|\alpha_{j}(\lambda)|+|\beta_{j}(\lambda)-\mu_{0}|<\frac{2\pi}{T_{\epsilon}},
\ \ \ \ \ \lambda\in \partial U(\lambda_{j}(\mu_{0})),  \ \ \epsilon\in (0,\epsilon_{0}).
\end{eqnarray}
Then there exists a small $\tilde{\epsilon}_{0}$ with $0<\tilde{\epsilon}_{0}\leq \epsilon_{0}$
such that for each $s\in [0,1]$ and $\epsilon\in (0,\tilde{\epsilon}_{0})$,
the number of the zeros for $\mathcal{E}(s,\cdot,\mu_{0},\epsilon)$ in $U(\lambda_{j}(\mu_{0}))$
is equal to the order of the zero $\lambda=\lambda_{j}(\mu_{0})$  for $\Delta(\lambda,\mu_{0})$.
\end{lemma}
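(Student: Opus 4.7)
The plan is a two-stage application of Lemma \ref{lm-app-1} (continuity of the number of roots in a parameter): one stage deforms the period length $t$ from $0$ to $T_\epsilon$ to count the zeros of the unperturbed function $\mathcal{E}(0,\cdot,\mu_0,\epsilon)$, and a second stage deforms the homotopy parameter $s$ from $0$ to $1$, with Lemma \ref{lm-Period}(ii) used to control boundary behaviour in $s$. Throughout I write $d$ for the size of $A(\lambda)$ (so $d=2n,\,m+n,\,n$ in cases \textbf{(C1)}--\textbf{(C3)}) and $m^*$ for the order of $\lambda_j(\mu_0)$ as a zero of $\Delta(\cdot,\mu_0)$.

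First I verify boundary non-vanishing. Since $A(\lambda)-\mathbf{i}\mu_0 I$ is constant in $\xi$, the $s=0$ monodromy equals $\exp((A(\lambda)-\mathbf{i}\mu_0 I)T_\epsilon)$ and $\mathcal{E}(0,\lambda,\mu_0,\epsilon)=0$ exactly when some Floquet exponent $(\alpha_j(\lambda)+\mathbf{i}(\beta_j(\lambda)-\mu_0))T_\epsilon$ lies in $2\pi\mathbf{i}\mathbb{Z}$. For $\lambda\in\partial U$ the hypothesis \eqref{condt-U-j} confines $|\alpha_j|+|\beta_j-\mu_0|$ strictly to $(0,2\pi/T_\epsilon)$: the lower bound rules out the lattice point $0$ and the upper bound rules out every nonzero lattice point, so $\mathcal{E}(0,\cdot,\mu_0,\epsilon)\neq 0$ on $\partial U$; in particular $\Delta(\cdot,\mu_0)\neq 0$ on $\partial U$. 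To extend to $s\in(0,1]$, the perturbation $sB(\xi,\epsilon)$ has sup-norm bounded by $\mathcal{B}(\epsilon)=O(\epsilon)$ uniformly in $s$, so Lemma \ref{lm-Period}(ii) keeps each perturbed Floquet multiplier close to its unperturbed value; compactness of $\partial U\times[0,1]$ then produces a single $\tilde{\epsilon}_0>0$ such that $\mathcal{E}(s,\cdot,\mu_0,\epsilon)\neq 0$ on $\partial U$ for all $(s,\epsilon)\in[0,1]\times(0,\tilde{\epsilon}_0)$.

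Second, I count the zeros of $\mathcal{E}(0,\cdot,\mu_0,\epsilon)$ in $U$ by shrinking the period. Define
\[
\widetilde G(\lambda,t):=\begin{cases}t^{-d}\det\bigl(\exp((A(\lambda)-\mathbf{i}\mu_0 I)t)-I\bigr), & t\in(0,T_\epsilon],\\[2pt] \det(A(\lambda)-\mathbf{i}\mu_0 I), & t=0,\end{cases}
\]
which shares zeros with $\mathcal{E}(0,\cdot,\mu_0,\epsilon)$ at $t=T_\epsilon$. Factoring $\exp(Mt)-I=tM\cdot\phi(tM)$ with $\phi(z)=1+z/2+z^2/6+\cdots$ gives $\widetilde G(\lambda,t)=\det(A(\lambda)-\mathbf{i}\mu_0 I)\det\phi((A(\lambda)-\mathbf{i}\mu_0 I)t)$, so $\widetilde G$ is jointly continuous on $\overline U\times[0,T_\epsilon]$ and analytic in $\lambda$ for each fixed $t$. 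The boundary argument of the previous paragraph, applied with $t$ in place of $T_\epsilon$ (using $t\leq T_\epsilon$), shows $\widetilde G(\cdot,t)\neq 0$ on $\partial U$ for every $t\in(0,T_\epsilon]$; at $t=0$ the function $\det(A(\lambda)-\mathbf{i}\mu_0 I)$ is a nonzero scalar multiple of $\Delta(\lambda,\mu_0)$ (the proportionality comes from the block-determinant computations in the proofs of Lemmas \ref{lm-ED-constant-1}--\ref{lm-ED-constant-3}) and thus also nonvanishing on $\partial U$. Lemma \ref{lm-app-1} now says that the number of zeros of $\widetilde G(\cdot,t)$ in $U$ is independent of $t\in[0,T_\epsilon]$; at $t=0$ this count equals the number of zeros of $\Delta(\cdot,\mu_0)$ in $U$, which by hypothesis is $m^*$.

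Finally, a second application of Lemma \ref{lm-app-1}, now with parameter $s\in[0,1]$ and the boundary non-vanishing from the first step, shows that the count of zeros of $\mathcal{E}(s,\cdot,\mu_0,\epsilon)$ in $U$ is independent of $s$, matching the value $m^*$ obtained at $s=0$. The main obstacle is the second step above: a priori $\mathcal{E}(0,\cdot,\mu_0,\epsilon)$ could acquire ``spurious'' zeros in $U$ at any $\lambda$ where an eigenvalue of $A(\lambda)$ strikes $\mathbf{i}\mu_0+2\pi\mathbf{i}T_\epsilon^{-1}\mathbb{Z}\setminus\{\mathbf{i}\mu_0\}$, and condition \eqref{condt-U-j} controls this only on $\partial U$; the period-shrinking deformation $t\to 0^+$ dispels these spurious zeros because for small $t$ the shifted lattice is pushed to magnitudes $\geq 2\pi/t$, which for $t$ small enough exceeds every eigenvalue of $A(\lambda)$ on the compact set $\overline U$, and Lemma \ref{lm-app-1} then transports the clean count at $t=0$ back to $t=T_\epsilon$.
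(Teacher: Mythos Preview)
Your proof is correct and, at the structural level, parallels the paper's: both establish that $\mathcal{E}(s,\cdot,\mu_0,\epsilon)$ is nonvanishing on $\partial U$ uniformly in $s\in[0,1]$ via the roughness result Lemma~\ref{lm-Period}(ii), and then invoke homotopy invariance of the zero count in $s$. The paper packages this invariance as Brouwer degree (Lemma~\ref{lm-BDT-ANA}) while you appeal directly to Lemma~\ref{lm-app-1}; since Lemma~\ref{lm-BDT-ANA} is itself proved from Lemma~\ref{lm-app-1}, the two are equivalent here.

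The substantive difference lies in how the base case $s=0$ is handled. The paper simply asserts in~\eqref{zero-cond} that the total zero count of $\mathcal{E}(0,\cdot,\mu_0,\epsilon)$ in $U$ equals the order of $\lambda_j(\mu_0)$ as a zero of $\Delta(\cdot,\mu_0)$, without explicitly excluding the possibility you flag---namely that $\mathcal{E}(0,\cdot,\mu_0,\epsilon)$ could pick up additional zeros in the interior of $U$ from eigenvalues of $A(\lambda)$ hitting ${\bf i}(\mu_0+2k\pi/T_\epsilon)$ with $k\neq0$, since~\eqref{condt-U-j} constrains only $\partial U$. Your period-shrinking homotopy in $t\in[0,T_\epsilon]$ resolves this cleanly: the factorization $\exp(Mt)-I=tM\,\phi(tM)$ makes $\widetilde G$ jointly continuous down to $t=0$, where the count is manifestly that of $\Delta(\cdot,\mu_0)$; and because $t\le T_\epsilon$ forces $2\pi/t\ge 2\pi/T_\epsilon$, the boundary hypothesis~\eqref{condt-U-j} continues to exclude every nonzero lattice hit on $\partial U$ throughout the deformation, so Lemma~\ref{lm-app-1} transports the count. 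This second homotopy is a genuine addition over the paper's argument and makes~\eqref{zero-cond} rigorous rather than asserted.
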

\begin{proof}
For a given $\mu\in\mathbb{R}$,
the function $\Delta(\cdot,\mu)$ is analytic, which implies that  all zeros of $\Delta(\cdot,\mu)$ are isolated.
Then we can choose  $U(\lambda_{j}(\mu_{0}))$  as small as possible such that
$U(\lambda_{j}(\mu_{0}))$ fulfils the properties in this lemma.

Now we prove this lemma by the Brouwer degree theory.
By the continuous dependency on parameters,
we obtain that
$\{\mathcal{E}(s,\cdot,\mu_{0},\epsilon): 0\leq s\leq 1\}$ is  a family of  continuous homotopies.
The remaining  proof is divided into three steps.

{\bf Step 1.} We prove  that
$\mathcal{E}(0,\cdot,\mu_{0},\epsilon)$ has a zero in $U(\lambda_{j}(\mu_{0}))$ and satisfies
$\mathcal{E}(0,\lambda,\mu_{0},\epsilon)\neq 0$ for each $\lambda\in \partial U(\lambda_{j}(\mu_{0}))$.
By \eqref{eq-evans-1}, we have
\begin{eqnarray*}
\mathcal{E}(0,\lambda,\mu_{0},\epsilon)=
{\rm det}\left(\exp\left((A(\lambda)-{\bf i}\mu_{0} I)T_{\epsilon}\right)-I\right).
\end{eqnarray*}
Since $\Delta(\lambda_{j}(\mu_{0}),\mu_{0})=0$,
we have that $\mathcal{E}(0,\lambda,\mu_{0},\epsilon)$ has a zero $\lambda=\lambda_{j}(\mu_{0})$ in  $U(\lambda_{j}(\mu_{0}))$.
Note that $\mathcal{E}(0,\lambda,\mu_{0},\epsilon)=0$ if and only if
$A(\lambda)$ has an eigenvalue of the form ${\bf i}(\mu_{0}+2k\pi/T_{\epsilon})$ for $k\in \mathbb{Z}$.
This together with  \eqref{condt-U-j} yields
that $\mathcal{E}(0,\lambda,\mu_{0},\epsilon)\neq 0$ for each $\lambda\in \partial U(\lambda_{j}(\mu_{0}))$.
By Lemma \ref{lm-BDT-ANA} and \eqref{df-BDT},
\begin{eqnarray}\label{zero-cond}
\begin{aligned}
d_{B}(\mathcal{E}(0,\cdot,\mu_{0},\epsilon),U(\lambda_{j}(\mu_{0})),0)
=&\ \mbox{the order of the zero $\lambda=\lambda_{j}(\mu_{0})$  for $\mathcal{E}(0,\cdot,\mu_{0},\epsilon)$}\\
=&\ \mbox{the order of the zero $\lambda=\lambda_{j}(\mu_{0})$  for $\Delta(\lambda,\mu_{0})$},
\end{aligned}
\end{eqnarray}
where $d_{B}$ is defined by \eqref{df-BDT}.

{\bf Step 2.}  We prove that
there exists a small $\tilde{\epsilon}_{0}$ with $0<\tilde{\epsilon}_{0}\leq \epsilon_{0}$
such that
\begin{eqnarray}\label{cond-F}
\mathcal{E}(s,\lambda,\mu_{0},\epsilon)\neq 0, \ \ \
(s,\lambda,\epsilon)\in [0,1]\times\partial U(\lambda_{j}(\mu_{0}))\times (0,\tilde{\epsilon}_{0}).
\end{eqnarray}

Let $\alpha_{j}(\lambda,s,\epsilon)+{\bf i}\beta_{j}(\lambda,s,\epsilon)$, $j=1,2,...,n$, denote
all  Floquet exponents of system \eqref{eq-period-1-pt} with $\mu=\mu_{0}$.
Note that for sufficiently small $|\epsilon|>0$,
$$|sB(\xi,\epsilon)|\leq \mathcal{B}(\epsilon)=O(\epsilon)$$
for each $\xi\in\mathbb{R}$ and each $s\in[0,1]$.
Then  by \eqref{condt-U-j} and (ii) in Lemma \ref{lm-Period},
there exists a small $\tilde{\epsilon}_{0}$ with $0<\tilde{\epsilon}_{0}\leq \epsilon_{0}$
such that
\begin{eqnarray*}
|\alpha_{j}(\lambda,s,\epsilon)|+|\beta_{j}(\lambda,s,\epsilon)|\neq 0, \ \ 0<|\alpha_{j}(\lambda,s,\epsilon)|+|\beta_{j}(\lambda,s,\epsilon)-\mu_{0}|< 2\pi/T_{\epsilon}
\end{eqnarray*}
for each $(s,\lambda,\epsilon)\in [0,1]\times\partial U(\lambda_{j}(\mu_{0}))\times (0,\tilde{\epsilon}_{0})$.
This implies \eqref{cond-F}.

{\bf Step 3.} We finish the proof by (iii) in Lemma \ref{lm-BDT-1}.
After checking the conditions in (iii) in Lemma \ref{lm-BDT-1},
we obtain that $d_{B}(\mathcal{E}(s,\cdot,\mu_{0},\epsilon),U(\lambda_{j}(\mu_{0})),0)$ is a constant
for every $s\in [0,1]$ and $\epsilon\in (0,\tilde{\epsilon}_{0})$,
that is,
\begin{eqnarray*}
d_{B}(\mathcal{E}(0,\cdot,\mu_{0},\epsilon),U(\lambda_{j}(\mu_{0})),0)
=d_{B}(\mathcal{E}(s,\cdot,\mu_{0},\epsilon),U(\lambda_{j}(\mu_{0})),0)
\ \mbox{ for }\ (s,\epsilon)\in[0,1]\times (0,\tilde{\epsilon}_{0}).
\end{eqnarray*}
Therefore, the proof is finished by \eqref{df-BDT}, \eqref{zero-cond} and Lemma \ref{lm-BDT-ANA}.
\end{proof}

\begin{remark}
By the Bloch-wave decomposition,
i.e., taking a change $p\to e^{{\bf i}\mu \xi}p$ for $\mu\in(-\pi/T_{\epsilon},\pi/T_{\epsilon}]$,
the operator $\mathcal{L}$ is changed into
\begin{eqnarray*}
\mathcal{L}_{\mu}p:=D(\partial_{\xi}+{\bf i}\mu I)^{2}p+a_{1}(\xi,\epsilon)(\partial_{\xi}+{\bf i}\mu I)p+a_{0}(\xi,\epsilon)p,
\ \ \ \mu\in(-\frac{\pi}{T_{\epsilon}},\frac{\pi}{T_{\epsilon}}],
\end{eqnarray*}
where $\mathcal{L}_{\mu}: L^{2}_{\rm per}([0,T_{\epsilon}])\to L^{2}_{\rm per}([0,T_{\epsilon}])$.
The domains of $\mathcal{L}_{\mu}$ in the first, second and third cases
are the spaces $H^{2}_{\rm per}([0,T_{\epsilon}])$,
$H^{2}_{\rm per}([0,T_{\epsilon}])\times H^{1}_{\rm per}([0,T_{\epsilon}])$
and $H^{1}_{\rm per}([0,T_{\epsilon}])$, respectively.
Then we have
\begin{eqnarray*}
\sigma(\mathcal{L})=\bigcup_{-\pi/T_{\epsilon}<\mu\leq \pi/T_{\epsilon}}\sigma_{\rm pt}(\mathcal{L}_{\mu}).
\end{eqnarray*}
See, for instance, \cite[Section 3]{Chen-Duan-21} or \cite[p.240]{Kapitula-Promislow-13}.
Let $\tilde{\lambda}_{1},...,\tilde{\lambda}_{m}$ denote all eigenvalues of ${L}_{\mu_{0}}$ in $U(\lambda_{j}(\mu_{0}))$
and $\tilde{\kappa}_{1},...,\tilde{\kappa}_{m}$ denote their algebraic multiplicities,
where $\mu_{0}$ and $U(\lambda_{j}(\mu_{0}))$ are defined as in Lemma \ref{lm-zero-1}.
By  \cite[Lemma 8.4.1]{Kapitula-Promislow-13},
we see that the algebraic multiplicity of the eigenvalue $\lambda$ of ${L}_{\mu}$ is equal to
the multiplicity of the zero $\lambda$ of $E(\lambda,\mu,\epsilon)$.
This together with Lemma \ref{lm-zero-1} yields that
$(\tilde{\kappa}_{1}+\cdot\cdot\cdot+\tilde{\kappa}_{m})$ is equal to the order of the zero $\lambda=\lambda_{j}(\mu_{0})$  for $\Delta(\lambda,\mu_{0})$.
\end{remark}

By the above lemma,  we prove the main result in this section.
\begin{theorem}\label{thm-1}{\rm (Perturbation of the spectrum)}
Suppose that the linear operator $\mathcal{L}$ defined by  \eqref{df-L} 
satisfies {\bf (H1)} and {\bf (H2)} in the first case,
{\bf (H1)}-{\bf (H3)} in the second case,
or {\bf (H1)}, {\bf (H2)} and {\bf (H4)} in the third case.
Then for each $i=1,2,3$, the following statements hold:
\begin{enumerate}
\item[(i)]
Suppose that $\lambda=\lambda_{0}$ is in the spectrum of $\mathcal{L}_{0}^{i}$.
Then its  generalized algebraic multiplicity $m_{ga}(\lambda_{0})$ is finite.
Furthermore, there exists a small $\tilde{\epsilon}_{0}$ with $0<\tilde{\epsilon}_{0}\leq \epsilon_{0}$
such that $\mathcal{L}$ with $0<\epsilon<\tilde{\epsilon}_{0}$ has  $m_{ga}(\lambda_{0})$ spectral points
$\tilde{\lambda}_{1}(\epsilon)$,...,$\tilde{\lambda}_{m_{ga}(\lambda_{0})}(\epsilon)$,
which are perturbed from $\lambda_{0}$ and satisfy that
$\tilde{\lambda}_{j}(\epsilon)\to \lambda_{0}$ as $\epsilon\to 0$.

\item[(ii)] For a certain $j\in \{1,2,...,n\}$,
let $\mathcal{I}$ be a connected closed interval in the domain of $\lambda_{j}(\cdot)$
such that for each $\mu\in\mathcal{I}$,
\begin{eqnarray}\label{cond-kT}
\ \ \ \mu\neq k\pi/T_{\epsilon}  \mbox{ for }\ \epsilon\in (0,\epsilon_{0}) \mbox{ and } k\in\mathbb{Z},\ \
\lambda_{j}(\mu)\cap \lambda_{i}(\mu)=\varnothing  \mbox{ for } j\neq i \mbox{ and } \mu\in\mathcal{I}.
\end{eqnarray}
Then there exists a small $\tilde{\epsilon}_{0}$ with $0<\tilde{\epsilon}_{0}\leq \epsilon_{0}$
such that the spectrum of the linear operator $\mathcal{L}$ with $0<\epsilon<\tilde{\epsilon}_{0}$ has
a continuous curve $\lambda_{j,\epsilon}(\mu)$ for $\mu\in\mathcal{I}$,
which is perturbed from $\lambda_{j}(\mu)$ and satisfy that
$\lambda_{j,\epsilon}(\mu)\to \lambda_{j}(\mu)$ as $\epsilon\to 0$
in the sense of the Hausdorff distance.

\item[(iii)] If the matrix $a_{0}^{0}$ has eigenvalues with positive real part,
then for sufficiently small $\epsilon$, the operator $\mathcal{L}$ is spectrally unstable.
\end{enumerate}
\end{theorem}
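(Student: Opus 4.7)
The plan is to deduce all three parts from Lemma~\ref{lm-zero-1}, together with the Evans-function characterization of the spectrum in Lemma~\ref{lm-spect-0} and the structural information about the limiting spectrum in Lemma~\ref{lm-zero-spect}. The common mechanism is that Lemma~\ref{lm-zero-1} converts information about real roots of $\Delta_i(\lambda_0,\mu)=0$ in $\mu$ into information about zeros of the Evans function $E(\cdot,\mu,\epsilon)$ near $\lambda_0$ with the correct multiplicity, which by Lemma~\ref{lm-spect-0} are precisely the points of $\sigma(\mathcal{L})$.

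For part~(i), I would first show that $m_{ga}(\lambda_0)$ is finite. Each $\Delta_i(\lambda_0,\cdot)$ is a nonzero polynomial in $\mu$: its leading term is $(-1)^n\mu^{2n}$ in cases $i=1,2$ (read off directly from \eqref{ED-cond} and \eqref{ED-cond-2}, using \textbf{(H3)} to factor $\det a_{22}^0$ in the second), and is $({\bf i}\mu)^n\det a_1^0\neq 0$ in case $i=3$ by \textbf{(H4)}. Hence it has only finitely many real roots $\mu_1(\lambda_0),\dots,\mu_{m_0}(\lambda_0)$, and each $m_a(\lambda_0,\mu_k)$ is finite. For every such $\mu_k$ I would choose a small disk $U_k\ni\lambda_0$ so that $\lambda_0$ is the only zero of $\Delta_i(\cdot,\mu_k)$ in $U_k$ and condition \eqref{condt-U-j} holds on $\partial U_k$; Lemma~\ref{lm-zero-1} then produces exactly $m_a(\lambda_0,\mu_k)$ zeros of $E(\cdot,\mu_k,\epsilon)=\mathcal{E}(1,\cdot,\mu_k,\epsilon)$ in $U_k$ for all sufficiently small $\epsilon$, and Lemma~\ref{lm-spect-0} identifies each of them with a spectral point of $\mathcal{L}$. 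Summing over $k$ gives the asserted $m_{ga}(\lambda_0)$ perturbed spectral points that converge to $\lambda_0$.

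For part~(ii), the hypothesis \eqref{cond-kT} ensures both that $\lambda_j(\mu)$ is a \emph{simple} root of $\Delta_i(\cdot,\mu)$ for every $\mu\in\mathcal{I}$ and that the separation condition \eqref{condt-U-j} can be met uniformly on a compact neighborhood of $\mathcal{I}$. Applying part~(i) pointwise yields a unique simple zero $\lambda_{j,\epsilon}(\mu)$ of $E(\cdot,\mu,\epsilon)$ near $\lambda_j(\mu)$; simplicity, analyticity of $E$ in $\lambda$, and joint continuity in $(\lambda,\mu,\epsilon)$ then allow a standard implicit-function-theorem argument at each $\mu$, which promotes $\lambda_{j,\epsilon}(\cdot)$ to a continuous curve on $\mathcal{I}$. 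Compactness of $\mathcal{I}$ upgrades the pointwise convergence to uniform convergence, and hence to convergence in Hausdorff distance. Part~(iii) is then immediate: Lemma~\ref{lm-zero-spect}(ii) supplies some $\lambda_0\in\Sigma_0^i$ with $\operatorname{Re}\lambda_0>0$, and part~(i) perturbs $\lambda_0$ to spectral points of $\mathcal{L}$ that still lie in the open right half plane for all sufficiently small $\epsilon$.

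The main obstacle I anticipate is the Bloch-resonance bookkeeping in part~(i): two distinct real roots $\mu_{k_1}$, $\mu_{k_2}$ of $\Delta_i(\lambda_0,\cdot)=0$ may differ by an integer multiple of $2\pi/T_\epsilon$, and because $\mathcal{E}$ is $2\pi/T_\epsilon$-periodic in $\mu$ (Lemma~\ref{lm-spect}(ii)), their contributions fold together in the fundamental domain $(-\pi/T_\epsilon,\pi/T_\epsilon]$. Arranging the neighborhoods $U_{k_1}$, $U_{k_2}$ and verifying \eqref{condt-U-j} so that each $\mu_k$ still contributes its full $m_a(\lambda_0,\mu_k)$ without double counting is the delicate step; this same resonance is what the exclusion $\mu\ne k\pi/T_\epsilon$ in \eqref{cond-kT} circumvents in part~(ii), and is where I expect the technical work to concentrate.
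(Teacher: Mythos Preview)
Your proposal is correct and follows essentially the same route as the paper: both derive (i) and (ii) from Lemma~\ref{lm-zero-1} via the Evans-function characterization in Lemma~\ref{lm-spect-0}, with the paper handling multiple real roots of $\Delta_i(\lambda_0,\cdot)$ by a ``without loss of generality'' reduction to a single $\mu_0$ rather than your explicit sum over $k$. The only notable difference is in (iii): the paper invokes (ii) to obtain a whole unstable spectral curve, whereas you go through (i) to produce a single unstable spectral point, which is marginally more direct and avoids having to verify \eqref{cond-kT} on an interval.
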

\begin{proof}
We first prove (i).
Note that $\Delta(\lambda,\mu)$ is analytic in $\lambda$ and $\mu$.
Then by the definition of $m_{ga}(\lambda)$ in \eqref{df-gam},
we can prove the first statement in (i).
Without loss of generality,
we assume that $\Delta(\lambda_{0},\mu)=0$ has exactly one real zero $\mu=\mu_{0}$.
Then $m_{ga}(\lambda_{0})=m_{a}(\lambda_{0},\mu_{0})$.
By the same argument in the proof for Lemma \ref{lm-zero-1},
there exists an open ball $U(\lambda_{0},\delta)$ in $\mathbb{C}$
with the center at $\lambda_{0}$ and the radius $\delta$
such that $\{\lambda_{0}\}=U(\lambda_{0},\delta)\bigcap \Sigma^{i}_{0}(\mu_{0})$ ($i=1,2,3$),
and all eigenvalues $\alpha_{i}(\lambda)+{\bf i}\beta_{i}(\lambda)$ of  $A(\lambda)$ for $\lambda\in U(\lambda_{0},\delta)$
satisfy \eqref{condt-U-j}, where $U(\lambda_{0},\delta)$ can shrink to $\lambda_{0}$ as $\epsilon\to0$.
By Lemma \ref{lm-zero-1},
the function $\mathcal{E}(1,\cdot,\mu_{0},\epsilon)$ has $m_{ga}(\lambda_{0})$ zeros $\tilde{\lambda}_{1}(\epsilon)$,...,$\tilde{\lambda}_{m_{ga}(\lambda_{0})}(\epsilon)$, which are in the interior of  $U(\lambda_{0},\delta)$
and satisfies that $\tilde{\lambda}_{j}(\epsilon)\to \lambda_{0}$ as $\epsilon\to 0$.
Therefore,  (i) is proved  by Lemma \ref{lm-spect-0} and (i) in Lemma \ref{lm-spect}.

Secondly, we prove (ii).
By \eqref{cond-kT}, there exists a small $\delta>0$ such that
for each $(\lambda,\mu)\in B(\lambda_{j}(\mu),\delta)\times \mathcal{I}\subset \mathbb{C}\times \mathbb{R}$
and each $\epsilon\in (0,\epsilon_{0})$, we have $\mathcal{E}(0,\lambda,\mu,\epsilon)\neq 0$.
By continuity of $\mathcal{E}(s,\lambda,\mu,\epsilon)$ with respect to $(\lambda,\mu)$,
the compactness of $[0,1]\times B(\lambda_{j}(\mu),\delta)\times \mathcal{I}$
in $\mathbb{R}\times\mathbb{C}\times \mathbb{R}$ and Lemma \ref{lm-Period},
there exists a small $\tilde{\epsilon}_{0}$ with $0<\tilde{\epsilon}_{0}\leq \epsilon_{0}$ such that
$\mathcal{E}(s,\lambda,\mu,\epsilon)\neq 0$ for $(s,\lambda,\mu)\in[0,1]\times B(\lambda_{j}(\mu),\delta)\times \mathcal{I}$.
Without loss of generality, we assume that $m_{ga}(\lambda_{j}(\mu))=1$ for each $\mu\in\mathcal{I}$.
Then similarly to the proof for Lemma \ref{lm-zero-1},
there exists a function $\lambda_{j,\epsilon}(\mu)$ perturbed from $\lambda_{j}(\mu)$ for $\mu\in\mathcal{I}$.
Note that the zeros of $\mathcal{E}(s,\cdot,\mu,\epsilon)$ is continuous in $\mu$.
Then $\lambda_{j,\epsilon}(\mu)$ is continuous in $\mu$.
Thus, (ii) is proved.

Finally, we prove (iii).
If the matrix $a_{0}^{0}$ has eigenvalues with positive real part,
then by Lemma \ref{lm-zero-spect},
there exists at least one connected closed interval $\mathcal{I}\subset\mathbb{R}$ and a $j\in \{1,2,...,n\}$
such that $\lambda_{j}(\mu)$ for $\mu\in\mathcal{I}$ satisfies \eqref{cond-kT}
and ${\rm Re}\lambda_{j}(\mu)>0$ for each $\mu\in\mathcal{I}$.
This together with (ii) in this theorem yields that for  sufficiently small $\epsilon>0$,
the spectrum of the linear operator $\mathcal{L}$ has a continuous curves lying in the unstable half plane ${\rm Re}\lambda>0$.
Therefore, the proof is now complete.
\end{proof}

\begin{remark}
We point out that the condition in \eqref{cond-kT} is necessary for
the persistence of continuous spectral curves.
Consider a scalar second-order operator
$$\mathcal{L}p:=\partial^{2}_{\xi}p+(a_{0}^{0}+2\epsilon \cos (2 \xi))p=\lambda p.$$
Solving the spectrum of $\mathcal{L}$ is to detect $\lambda\in\mathbb{C}$ such that
\begin{eqnarray}\label{eq-Mathieu}
\partial^{2}_{\xi}p+(a_{0}^{0}-\lambda+\epsilon \cos (2 \xi))p=0
\end{eqnarray}
has a Floquet exponent with zero real part.
It is clear that the spectrum of $\mathcal{L}$ is purely real.
By a direct computation, we have
$\Sigma_{0}^{1}=\{\lambda\in\mathbb{C}: \lambda=a_{0}^{0}-\mu^{2}, \ \ \mu\in\mathbb{R}\}$.
We see that $\Sigma_{0}^{1}$ is a continuous spectral curve.
However, by applying the averaging theory, we can find that
there exist $\lambda$ in a neighborhood of $a_{0}^{0}-k^{2}$ for $k\in \mathbb{Z}$
such that for a sufficiently small $\epsilon>0$,
\eqref{eq-Mathieu} has a unbounded solution (see more details in \cite[p.130]{Hale-80}).
This implies that the  Floquet spectrum curve of $\mathcal{L}$ is broken near $\lambda=a_{0}^{0}-k^{2}$.
\end{remark}

\begin{remark}
By (i) of Theorem \ref{thm-1},
the generalized algebraic multiplicity $m_{ga}(\lambda_{0})$ of $\lambda_{0}$ for the limiting operator
is equal to the number of the eigenvalues of $\mathcal{L}$ perturbed from $\lambda_{0}$ (counted by multiplicity).
It is also interesting to study how to count
the algebraic and geometric multiplicities of each perturbed eigenvalue
$\tilde{\lambda}_{j}(\epsilon)$ ($j=1,2,...,m_{ga}(\lambda_{0})$)
by the limiting operator and it perturbation,
which still remains to be clarified.
\end{remark}

\section{Applications and remarks}
\label{sec:app}

In this section, we apply the main results in the present paper to study the stability of
patterns listed in three examples at the beginning of this paper.

\subsection{Ginzburg-Landau equation}

Consider the real Ginzburg-Landau equation \eqref{GLE} in Example 1.
It is clear that  $(\alpha,\beta)$ satisfies a two component reaction-diffusion equation
\begin{eqnarray}\label{GLE-AB}
\begin{aligned}
\frac{\partial \alpha}{\partial t} &
    =\alpha_{\xi\xi}-\frac{2\omega}{p^{2}}\beta_{\xi}+\frac{2\omega p_{\xi}}{p^{3}}\beta
      +\left(1-\frac{\omega^{2}}{p^{4}}-(\alpha^{2}+\beta^{2})\right)\alpha,
\\
\frac{\partial \beta}{\partial t} &
    =\beta_{\xi\xi}+\frac{2\omega}{p^{2}}\alpha_{\xi}-\frac{2\omega p_{\xi}}{p^{3}}\alpha
      +\left(1-\frac{\omega^{2}}{p^{4}}-(\alpha^{2}+\beta^{2})\right)\beta.
\end{aligned}
\end{eqnarray}
This system has a periodic steady solution $(p(\xi),0)$.
The linearization $\mathcal{L}$ of \eqref{GLE-AB} about $(p,0)$ is in the form \eqref{df-L},
where $D$, $a_{1}$ and $a_{0}$ respectively have the form
\begin{eqnarray} \label{GLE-a-01}
\ \ \ \ \ \ \  D=\left(
\begin{array}{cc}
1 &  0\\
0 &  1
\end{array}
\right), \
a_{1}(\xi,\epsilon)=\left(
\begin{array}{cc}
0 &  -\frac{2\omega}{p^{2}}\\
\frac{2\omega}{p^{2}} &  0
\end{array}
\right), \
a_{0}(\xi,\epsilon)=\left(
\begin{array}{cc}
1-\frac{\omega^{2}}{p^{4}}-3p^{2} & \frac{2\omega p_{\xi}}{p^{3}} \\
-\frac{2\omega p_{\xi}}{p^{3}} &  1-\frac{\omega^{2}}{p^{4}}-p^{2}
\end{array}
\right).
\end{eqnarray}
By applying  Theorem \ref{thm-1} to the real Ginzburg-Landau equation \eqref{GLE},
we obtain the following result.
\begin{proposition} \label{prop-SBL}
For a fixed $\omega\in (0,\sqrt{4/27})$,
the quasiperiodic patterns near the periodic pattern $u(\xi)=p_{1}e^{ikx}$
of the real Ginzburg-Landau equation \eqref{GLE} is spectrally unstable,
where  $k^{2}+p_{1}^{2}=1$ and $kp_{1}^{2}=\omega$.
\end{proposition}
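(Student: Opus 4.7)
The plan is to invoke Theorem \ref{thm-1}(ii) for the linearization $\mathcal{L}$ with coefficients given by \eqref{GLE-a-01} in the first case $D=I_{n}$. Since (as I compute below) the matrix $a_{0}^{0}$ has only nonpositive eigenvalues, part (iii) of the theorem is not directly applicable, and I must exhibit explicitly an unstable branch of the spectrum of the limiting operator. The small parameter $\epsilon$ parametrizes the amplitude of the periodic orbits of $p''+f(p)=0$ around $(p_{1},0)$, so that $p(\xi)=p_{1}+O(\epsilon)$, $p_{\xi}(\xi)=O(\epsilon)$ uniformly in $\xi$, and $T_{\epsilon}\to T_{0}=2\pi/\sqrt{f'(p_{1})}$. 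Hypotheses \textbf{(H1)} and \textbf{(H2)} then follow by inspection of \eqref{GLE-a-01}, and using $\omega=kp_{1}^{2}$ together with $k^{2}+p_{1}^{2}=1$, the limiting matrices reduce to
\begin{equation*}
a_{1}^{0}=\begin{pmatrix}0 & -2k\\ 2k & 0\end{pmatrix}, \qquad a_{0}^{0}=\begin{pmatrix}-2p_{1}^{2} & 0\\ 0 & 0\end{pmatrix}.
\end{equation*}

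A direct computation from \eqref{ED-cond} reduces the dispersion relation to
\begin{equation*}
\Delta_{1}(\lambda,\mu)=\lambda^{2}+2(p_{1}^{2}+\mu^{2})\lambda+\mu^{2}(\mu^{2}+2p_{1}^{2}-4k^{2}),
\end{equation*}
whose discriminant $4(p_{1}^{4}+4k^{2}\mu^{2})$ is strictly positive. Hence for every $\mu\neq 0$ the two spectral branches $\lambda_{\pm}(\mu)$ are real and distinct, and the upper branch $\lambda_{+}(\mu)=-(p_{1}^{2}+\mu^{2})+\sqrt{p_{1}^{4}+4k^{2}\mu^{2}}$ is strictly positive precisely when $\mu^{2}\in(0,\,4k^{2}-2p_{1}^{2})$. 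The key arithmetic step is verifying that this interval is non-degenerate: using $k^{2}=1-p_{1}^{2}$ the condition reduces to $p_{1}^{2}<2/3$, and since $p_{1}$ is the smaller positive root of $p^{4}(1-p^{2})=\omega^{2}$ while $p\mapsto p^{4}(1-p^{2})$ attains its maximum $4/27$ at $p^{2}=2/3$, the assumption $\omega^{2}<4/27$ forces $p_{1}^{2}<2/3$, as required.

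The main obstacle is honoring the technical condition \eqref{cond-kT} of Theorem \ref{thm-1}(ii). I fix a compact subinterval $\mathcal{I}=[\mu_{1},\mu_{2}]\subset(0,\sqrt{4k^{2}-2p_{1}^{2}})$, on which the two branches never cross (automatic from the strict positivity of the discriminant). For the lattice condition $\mu\neq k\pi/T_{\epsilon}$, $k\in\mathbb{Z}$, I use $T_{\epsilon}\to T_{0}>0$, so that for small $\epsilon$ the set $\{k\pi/T_{\epsilon}\}_{k\in\mathbb{Z}}$ is a small perturbation of the discrete set $\{k\pi/T_{0}\}_{k\in\mathbb{Z}}$; shrinking $\mathcal{I}$ if necessary so that its closure lies strictly between two consecutive points of $\{k\pi/T_{0}\}$ secures, by continuity, a uniform threshold $\tilde{\epsilon}_{0}$ below which $\mathcal{I}$ avoids the perturbed lattice. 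Theorem \ref{thm-1}(ii) then yields, for $0<\epsilon<\tilde{\epsilon}_{0}$, a continuous spectral curve $\lambda_{j,\epsilon}:\mathcal{I}\to\mathbb{C}$ of $\mathcal{L}$ converging to $\lambda_{+}$ on $\mathcal{I}$ as $\epsilon\to 0$. Since $\min_{\mu\in\mathcal{I}}{\rm Re}\,\lambda_{+}(\mu)>0$, continuity forces ${\rm Re}\,\lambda_{j,\epsilon}(\mu)>0$ throughout $\mathcal{I}$ for all sufficiently small $\epsilon>0$, which establishes the claimed spectral instability.
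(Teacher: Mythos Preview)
Your proof is correct and follows essentially the same route as the paper: compute the limiting matrices $a_{1}^{0}$ and $a_{0}^{0}$, derive the dispersion relation $\Delta_{1}(\lambda,\mu)=\lambda^{2}+2(p_{1}^{2}+\mu^{2})\lambda+\mu^{2}(\mu^{2}+6p_{1}^{2}-4)$ (your constant term $\mu^{2}+2p_{1}^{2}-4k^{2}$ is the same via $k^{2}=1-p_{1}^{2}$), observe that $p_{1}^{2}<2/3$ forces a positive root for $\mu$ in a nonempty interval, and then invoke Theorem~\ref{thm-1}(ii). Your treatment is in fact more careful than the paper's in two respects: you verify explicitly that the discriminant is strictly positive so the two branches are real and distinct, and you address condition~\eqref{cond-kT} by shrinking $\mathcal{I}$ to avoid the lattice $\{k\pi/T_{\epsilon}\}$, which the paper leaves implicit.
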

\begin{proof}
Since $a_{1}$ and $a_{0}$ in \eqref{GLE-a-01} satisfy
\begin{eqnarray}\label{eq-limit-GLE}
a_{1}(\xi,\epsilon)
\to a_{1}^{0}=\left(
\begin{array}{cc}
0 &  -\frac{2\omega_{0}}{p^{2}_{1}}\\
\frac{2\omega_{0}}{p^{2}_{1}} &  0
\end{array}
\right), \ \ \
a_{0}(\xi,\epsilon)\to a_{0}^{0}=\left(
\begin{array}{cc}
-2p_{1}^{2} & 0 \\
0 &  0
\end{array}
\right),
\end{eqnarray}
as $p\to p_{1}$,
where we use the equality $\omega^{2}=p_{1}^{4}-p_{1}^{6}$.
Then a direct computation yields that
the corresponding $\Delta_{1}(\lambda,\mu)$ is in the form
\begin{eqnarray*}
\Delta_{1}(\lambda,\mu)
 =\lambda^{2}+2(p_{1}^{2}+\mu^{2})\lambda+\mu^{2}(\mu^{2}+6p_{1}^{2}-4).
\end{eqnarray*}
Since $\omega\in (0,\sqrt{4/27})$, we have $p_{1}<\sqrt{2/3}$.
This implies that $6p_{1}^{2}-4<0$.
Thus for each $\mu$ with $0\leq \mu^{2}\leq 4-6p_{1}^{2}$,
equation $\Delta_{1}(\lambda,\mu)=0$ has a non-negative root $\lambda_{1}(\mu)>0$.
By (ii) in Theorem \ref{thm-1} and
the fact that the spectrum of $\mathcal{L}$ is real (see \cite[p.506]{Doelman-Cardner-Jones-95}),
the spectrum of the quasiperiodic patterns near the periodic pattern $u(\xi)=p_{1}e^{ikx}$
contains  a non-empty interval along the positive real axis.
This finishes the proof.
\end{proof}

\begin{remark}
We point out that the limiting matrix $a_{0}^{0}$ in \eqref{eq-limit-GLE}  has two non-positive eigenvalues.
We can not obtain the instability of quasi-periodic pattern near the periodic pattern $u(\xi)=p_{1}e^{ikx}$
by the eigenvalues of $a_{0}^{0}$.
So we require to analyze the spectral curves of the related limiting operators
and then obtain the instability by (ii) in  Theorem \ref{thm-1}.
\end{remark}

\subsection{Reaction-diffusion systems coupled with ODEs}

Consider the coupled system \eqref{PDE-ODE} in Example 2.
The linearization of \eqref{PDE-ODE} about $(\phi_{\epsilon}(\xi+ct),\psi_{\epsilon}(\xi+ct))$ is in the form \eqref{df-L}.
More precisely, the corresponding  $D$, $a_{1}$ and $a_{0}$ are given by
\begin{eqnarray*}
D=\left(
\begin{array}{cc}
1 &  0\\
0 &  0
\end{array}
\right), \ \
a_{1}(\xi,\epsilon)=\left(
\begin{array}{cc}
-c &  0\\
0 &  -c
\end{array}
\right), \ \
a_{0}(\xi,\epsilon)=\left(
\begin{array}{cc}
f_{u}(\phi_{\epsilon},\psi_{\epsilon},\alpha) & f_{w}(\phi_{\epsilon},\psi_{\epsilon},\alpha) \\
g_{u}(\phi_{\epsilon},\psi_{\epsilon},\alpha) &  g_{w}(\phi_{\epsilon},\psi_{\epsilon},\alpha)
\end{array}
\right),
\end{eqnarray*}
where $\xi\in\mathbb{R}$, $\alpha=\alpha_{0}+O(\epsilon)$ and $c=c_{0}+O(\epsilon)$
for sufficiently small $\epsilon>0$.
Recall that $\Delta_{2}(\lambda,\mu)$ is defined by \eqref{ED-cond-2}.
By a direct computation, we have
\begin{eqnarray*}
\Delta_{2}(\lambda,\mu)
\!\!\!&=&\!\!\!
{\rm det}\left(
\begin{array}{cc}
\lambda+\mu^{2}+{\bf i}c_{0}\mu -f_{u}(0,0,\alpha_{0})  & -f_{w}(0,0,\alpha_{0}) \\
-g_{u}(0,0,\alpha_{0}) & \lambda+{\bf i}c_{0}\mu -g_{w}(0,0,\alpha_{0})
\end{array}
\right)\\
\!\!\!&=&\!\!\! \lambda^{2}+(\mu^{2}+2{\bf i}c_{0}\mu-\varpi_{0}^{2})\lambda-\mu^{2}(c_{0}^{2}+g_{w}(0,0,\alpha_{0}))\\
\!\!\!&=&\!\!\! \lambda(\lambda+\mu^{2}+2{\bf i}c_{0}\mu-\varpi_{0}^{2}).
\end{eqnarray*}
Solving $\Delta_{2}(\lambda,\mu)=0$ yields
\begin{eqnarray*}
\Sigma_{0}^{2}=\left\{\lambda\in\mathbb{C}:\
\lambda_{1}(\mu)\equiv 0,\  \lambda_{2}(\mu)=-\mu^{2}-2{\bf i}c_{0}\mu+\varpi_{0}^{2}, \ \mu\in\mathbb{R}\right\}.
\end{eqnarray*}
The perturbation results for the perturbed periodic wave $(\phi_{\epsilon},\psi_{\epsilon})$
are summarized as follows.

\begin{proposition}\label{FHN}
For a sufficiently small $\epsilon>0$,
the linearization of \eqref{PDE-ODE} about $(\phi_{\epsilon},\psi_{\epsilon})$ satisfies the following statements:
\begin{enumerate}
\item[(i)]
For each $\mu\in\mathbb{R}$,
there exists only one $\lambda_{\epsilon}(\mu)$ perturbed from $\lambda_{j}(\mu)$ for $j=1,2$.

\item[(ii)] For a connected closed $\mathcal{I}\subset \mathbb{R}$ bounded away from $\mu=k\varpi_{0}/2$
for all $k\in\mathbb{Z}$,
there exists a continuous spectral curve perturbed from
$\{\lambda\in \mathbb{C}: \lambda=\lambda_{2}(\mu):=-\mu^{2}-2{\bf i}c_{0}\mu+\varpi_{0}^{2}, \ \ \mu\in\mathcal{I}\}$.

\item[(iii)] The perturbed periodic waves from fold-Hopf bifurcation are spectrally unstable.
\end{enumerate}
\end{proposition}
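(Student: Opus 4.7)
The plan is to place the linearization in the second-case framework of Theorem \ref{thm-1} and then read off (i)--(iii) from the already-computed dispersion relation $\Delta_{2}(\lambda,\mu)=\lambda(\lambda+\mu^{2}+2\mathbf{i}c_{0}\mu-\varpi_{0}^{2})$.

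First, I would verify (H1)--(H3). Since $(\phi_{\epsilon},\psi_{\epsilon})$ is $T_{\epsilon}$-periodic with $T_{\epsilon}=2\pi/\varpi_{0}+O(\epsilon)$, the coefficient $a_{0}(\xi,\epsilon)$ is periodic, giving (H1) with $T_{0}=2\pi/\varpi_{0}$. The periodic orbit collapses to the fold-Hopf equilibrium at the origin, so $(\phi_{\epsilon},\psi_{\epsilon})\to 0$ uniformly in $\xi$; hence $a_{0}(\xi,\epsilon)\to a_{0}^{0}$ (the Jacobian of $(f,g)$ at the equilibrium) and $a_{1}(\xi,\epsilon)=-cI\to -c_{0}I$, verifying (H2). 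For (H3), $a_{1}$ is diagonal so $a_{12}\equiv 0$, and $a_{22}(\epsilon)=-c$ is invertible since $c_{0}\neq 0$. Theorem \ref{thm-1} therefore applies.

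For (i), at any fixed $\mu\in\mathbb{R}$ the factorization $\Delta_{2}(\lambda,\mu)=\lambda(\lambda-\lambda_{2}(\mu))$ exhibits $\lambda_{1}(\mu)=0$ and $\lambda_{2}(\mu)=-\mu^{2}-2\mathbf{i}c_{0}\mu+\varpi_{0}^{2}$ as simple zeros of $\Delta_{2}(\cdot,\mu)$; these coincide only if $\mu^{2}=\varpi_{0}^{2}$ and $c_{0}\mu=0$ simultaneously, which is impossible because $c_{0}\neq 0$. Lemma \ref{lm-zero-1} applied in small disjoint disks around $\lambda_{j}(\mu)$ then yields exactly one zero of $\mathcal{E}(1,\cdot,\mu,\epsilon)$ in each disk, producing a unique perturbed spectral point near each $\lambda_{j}(\mu)$. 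For (ii), I would check the two conditions in \eqref{cond-kT} on $\mathcal{I}$: non-crossing follows from the argument just given, extended uniformly over compact $\mathcal{I}$ by continuity; since $\pi/T_{\epsilon}\to \varpi_{0}/2$ and $\mathcal{I}$ stays a positive distance from every $k\varpi_{0}/2$, the condition $\mu\neq k\pi/T_{\epsilon}$ holds for all sufficiently small $\epsilon>0$ and all $k\in\mathbb{Z}$. Theorem \ref{thm-1}(ii) then delivers a continuous perturbed curve $\mu\mapsto \lambda_{2,\epsilon}(\mu)$. For (iii), $\mathrm{Re}\,\lambda_{2}(\mu)=\varpi_{0}^{2}-\mu^{2}>0$ on $(-\varpi_{0},\varpi_{0})$; choosing any closed $\mathcal{I}\subset(0,\varpi_{0})$ avoiding the countable set $\{k\varpi_{0}/2\}_{k\in\mathbb{Z}}$ places the perturbed curve in the right half plane for small $\epsilon$, implying spectral instability.

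I anticipate the main subtlety to be the uniformity required to pass from the pointwise simple-zero analysis to the continuous curve in (ii): one must ensure that the disks used in Lemma \ref{lm-zero-1} and the threshold $\tilde{\epsilon}_{0}$ can be chosen independently of $\mu\in\mathcal{I}$, which rests on compactness of $\mathcal{I}$ combined with joint continuity of the Floquet exponents in $(\lambda,\mu,\epsilon)$ supplied by Lemma \ref{lm-Period}. The remaining work --- confirming the factorization of $\Delta_{2}$ using the fold-Hopf normal-form identity $g_{w}(0,0,\alpha_{0})=-c_{0}^{2}$ implicit in the structure of the Jacobian of \eqref{3D-SYSTEM} at the bifurcation point --- is a routine algebraic check.
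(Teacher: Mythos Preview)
Your proposal is correct and follows essentially the same strategy as the paper: verify {\bf (H1)}--{\bf (H3)}, use the factorization $\Delta_{2}(\lambda,\mu)=\lambda(\lambda-\lambda_{2}(\mu))$ together with $\lambda_{1}(\mu)\neq\lambda_{2}(\mu)$ for all real $\mu$, and then appeal to Theorem~\ref{thm-1}. The only tactical difference is in (iii): the paper simply invokes part (i) at $\mu=0$ to produce a single perturbed spectral point near $\lambda_{2}(0)=\varpi_{0}^{2}>0$, whereas you route through part (ii) on a closed subinterval of $(0,\varpi_{0})$; both arguments are valid, but the paper's is slightly more direct since it avoids having to select $\mathcal{I}$ away from the resonant set $\{k\varpi_{0}/2\}$.
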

\begin{proof}
Since $\lambda_{2}(\mu)=0$ has no real roots,
then all generalized algebraic multiplicities  of $m_{ga}(\lambda_{j}(\mu))$ of $\lambda_{j}(\mu)$  are equal to one.
Then the proof for (i) is finished by (i) in Theorem \ref{thm-1}.

Recall that the period of the perturbed periodic wave $(\phi_{\epsilon},\psi_{\epsilon})$ has the expansion
\begin{eqnarray*}
T_{\epsilon}=2\pi/\varpi_{0}+O(\epsilon), \ \ \ 0<\epsilon \ll 1.
\end{eqnarray*}
Then by (ii) in Theorem \ref{thm-1}, the statement in (ii) holds.

Substituting $\mu=0$ into $\lambda_{2}(\mu)$ yields $\lambda_{2}(0)=\varpi_{0}^{2}$.
Then by (i) there exists one $\lambda_{\epsilon}(0)$ perturbed from $\lambda_{2}(0)=\varpi_{0}^{2}$ for sufficiently small $\epsilon>0$.
This implies that  the spectrum of the linearization $\mathcal{L}$ has an element in ${\rm Re}\lambda>0$.
Thus, (iii) is proved. Therefore, the proof is now complete.
\end{proof}

\begin{remark}
This proposition generalizes the previous results in \cite[Theorem 3.8]{Chen-Duan-21}
and obtain the perturbations of the related spectrum and spectral curves.
The arguments in the present paper is not necessarily to check the conditions for relatively bounded perturbation
\cite{Chen-Duan-21,Kato-95},
which is always involved complicated estimates.
With our obtained results, it become easier to prove  \cite[Theorem 1.4]{Alvarez-Plaza-21}, \cite[Theorem 1.3]{Alvarez-Murillo-Plaza-22}
and \cite[Theorem 3.8]{Chen-Duan-21}.
\end{remark}

\subsection{Hyperbolic Burgers-Fisher model}

Consider  the hyperbolic Burgers-Fisher model \eqref{H-model} in Example 3.
The spectral stability of periodic traveling wave $(\phi_{h},\psi_{h})$ with respect to \eqref{H-model}
is determined by a linear operator in the form \eqref{df-L},
where  $D$, $a_{1}$ and $a_{2}$ are respectively given by
\begin{eqnarray*}
D=\left(
\begin{array}{cc}
0 &  0\\
0 &  0
\end{array}
\right), \ \
a_{1}(\xi,\epsilon)=\left(
\begin{array}{cc}
1/2 &  -1\\
-1 &  1/2
\end{array}
\right), \ \
a_{0}(\xi,\epsilon)=\left(
\begin{array}{cc}
g'(\phi_{h}(\xi)) & 0 \\
f'(\phi_{h}(\xi)) &  -1
\end{array}
\right),
\end{eqnarray*}
where $\xi\in\mathbb{R}$ and $h\in(-1/6,0)$.
Without loss of generality, this linearized operator is denoted by $\mathcal{L}^{3}$.

Note that
\begin{eqnarray}\label{eq-limt}
g'(\phi_{h})\to g'(0)=1, \ \ \ f'(\phi_{h})\to f'(0)=0,\ \ \ \ \mbox{ as }\ h\to 0^{-}.
\end{eqnarray}
Then  the related matrix $\Delta_{3}(\lambda,\mu)$ is in the form
\begin{eqnarray*}
\Delta_{3}(\lambda,\mu)
\!\!\!&=&\!\!\!
{\rm det}\left(
\begin{array}{cc}
\lambda-1-{\bf i}\frac{\mu }{2} & {\bf i}\mu \\
{\bf i}\mu & \lambda+1-{\bf i}\frac{\mu }{2}
\end{array}
\right).
\end{eqnarray*}
Solving $\Delta_{3}(\lambda,\mu)=0$ yields
\begin{eqnarray*}
\Sigma_{0}^{3}=\left\{\lambda\in\mathbb{C}: \lambda_{1}(\mu)={\bf i}\frac{\mu }{2}+\sqrt{1-\mu^{2}}, \
               \lambda_{2}(\mu)={\bf i}\frac{\mu }{2}-\sqrt{1-\mu^{2}}, \ \ \mu\in\mathbb{R}\right\}.
\end{eqnarray*}

Concerning  the spectrum of the linearized operator $\mathcal{L}^{3}$,
we have the following results.
\begin{proposition}\label{Prop-H-model}
The linearized operator $\mathcal{L}^{3}$ satisfies the following statements:
\begin{enumerate}
\item[(i)]
There exists only one $\lambda_{\epsilon}(\mu)$ perturbed from each $\lambda_{j}(\mu)$ for $j=1,2$ and $\mu\neq \pm 1$,
and two eigenvalues $\lambda_{\epsilon}^{1,2}(\pm 1)$ perturbed from $\lambda=\pm{\bf i}/2$.

\item[(ii)] For a connected closed $\mathcal{I}\subset \mathbb{R}$ bounded away from $\mu=k/\sqrt{3}$ for all $k\in\mathbb{Z}$
and $\mu=\pm1$, there exists a continuous spectral curve perturbed from
$\{\lambda\in \mathbb{C}: \lambda=\lambda_{1}(\mu)={\bf i}\frac{\mu }{2}\pm\sqrt{1-\mu^{2}}, \ \ \mu\in\mathcal{I}\}$.

\item[(iii)] There exists a sufficiently small $h_{0}>0$ such that periodic traveling waves $(\phi_{h},\psi_{h})$ for $h\in(-h_{0},0)$
are spectrally unstable.
\end{enumerate}
\end{proposition}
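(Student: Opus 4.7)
The plan is to apply Theorem \ref{thm-1} to the limiting operator $\mathcal{L}_{0}^{3}$. From \eqref{eq-limt} we obtain
\[
\Delta_{3}(\lambda,\mu)=\bigl(\lambda-\tfrac{{\bf i}\mu}{2}\bigr)^{2}+\mu^{2}-1,
\]
whose zeros in $\lambda$ are the two branches $\lambda_{1,2}(\mu)={\bf i}\mu/2\pm\sqrt{1-\mu^{2}}$; since the discriminant is $1-\mu^{2}$, these branches coincide precisely at $\mu=\pm 1$, producing the double root $\lambda=\pm{\bf i}/2$. For part (i), I would apply Theorem \ref{thm-1}(i) at each Bloch frequency $\mu_{0}$: each $\lambda_{j}(\mu_{0})$ is a simple root of $\Delta_{3}(\cdot,\mu_{0})$ when $\mu_{0}\neq\pm 1$, while $\lambda=\pm{\bf i}/2$ is a double root when $\mu_{0}=\pm 1$, and the stated count of perturbed eigenvalues follows at once.

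For part (ii), the two conditions in \eqref{cond-kT} must be verified on the chosen interval $\mathcal{I}$. The non-crossing condition $\lambda_{1}(\mu)\neq\lambda_{2}(\mu)$ reduces to $\mu\neq\pm 1$. For $\mu\neq k\pi/T_{\epsilon}$, I would compute the limiting period: since $(\phi_{h},\psi_{h})$ surrounds the center $E_{1}=(0,0)$ of \eqref{H-2D-model-1} and shrinks to it as $h\to 0^{-}$, $T_{\epsilon}\to T_{0}$ equals the linear period at $E_{1}$. A routine linearization shows the Jacobian of \eqref{H-2D-model-1} at $E_{1}$ has eigenvalues $\pm 2{\bf i}/\sqrt{3}$, so $T_{0}=\pi\sqrt{3}$ and $k\pi/T_{\epsilon}\to k/\sqrt{3}$. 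Theorem \ref{thm-1}(ii) then furnishes the perturbed continuous spectral curve on every $\mathcal{I}$ avoiding $\pm 1$ and $\{k/\sqrt{3}\}$.

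For part (iii) the quickest route is Theorem \ref{thm-1}(iii): the limiting matrix $a_{0}^{0}={\rm diag}(g'(0),-1)={\rm diag}(1,-1)$ carries the positive eigenvalue $1$, yielding spectral instability directly. More informatively, picking $\mathcal{I}\subset(-1,1)$ bounded away from $\pm 1$ and $\{k/\sqrt{3}\}$, one has ${\rm Re}\,\lambda_{1}(\mu)=\sqrt{1-\mu^{2}}>0$ on $\mathcal{I}$, so by part (ii) the perturbed spectral curve lies in the open right half plane and $(\phi_{h},\psi_{h})$ is unstable for all sufficiently small $h_{0}>0$.

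The principal obstacle is the coalescence points $\mu=\pm 1$, where the two simple branches merge into the double eigenvalue $\pm{\bf i}/2$; the continuity construction in Theorem \ref{thm-1}(ii) breaks down there, forcing separate treatment of these slices via part (i). A secondary but essential step is the explicit identification of the limiting period $T_{0}=\pi\sqrt{3}$ from the spectrum of the Jacobian of \eqref{H-2D-model-1} at $E_{1}$, without which the resonance exclusion $\mu\neq k/\sqrt{3}$ cannot be matched to the hypothesis $\mu\neq k\pi/T_{\epsilon}$ of Theorem \ref{thm-1}(ii).
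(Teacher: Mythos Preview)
Your proof is correct and follows the same overall strategy as the paper: verify that the coefficients satisfy the standing hypotheses, identify the limiting period $T_{0}$, and then invoke the three parts of Theorem~\ref{thm-1} in turn. The one point of genuine divergence is how $T_{0}=\sqrt{3}\,\pi$ is obtained. The paper derives it by an explicit change of variables $w=u-2v$ followed by a polar-type expansion of the period integral, which in fact yields the quantitative statement $T(h)=\sqrt{3}\,\pi+O(\sqrt{|h|})$. You instead read $T_{0}$ directly off the spectrum of the Jacobian of \eqref{H-2D-model-1} at the nondegenerate center $E_{1}$, using the standard fact that the period of small-amplitude closed orbits near such a center converges to the linear period $2\pi/\omega$ with $\omega=2/\sqrt{3}$. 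Your route is shorter and fully sufficient for the proposition, since only the limit $T_{\epsilon}\to T_{0}$ (not a rate) is needed to match the resonance exclusion $\mu\neq k\pi/T_{\epsilon}$ to $\mu\neq k/\sqrt{3}$; the paper's explicit integral buys a convergence rate that is not subsequently used.
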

\begin{proof}
We begin by verifying that
the linearized operator $\mathcal{L}^{3}$ satisfies the hypotheses {\bf (H1)} and {\bf (H2)} as $h\to 0^{-}$.
Note that \eqref{eq-limt} implies {\bf (H2)}.
Now we only need to check {\bf (H1)}.
For each $h\in(-1/6,0)$,
let $T(h)$ denote the period of $(\phi_{h},\psi_{h})$.
By the change of the variables $u=u$ and $w=u-2v$,
system \eqref{H-2D-model-1} is transformed into
\begin{eqnarray*}
u' =\frac{2}{3}w,\ \ \ \ \
w' =2u^{2}-2u,
\end{eqnarray*}
Then the periodic orbits $\tilde{\Gamma}_{\tilde{h}}$ of the above equation are determined by the following equation
\[
\tilde{h}=\frac{1}{3}w^{2}+u^{2}-\frac{2}{3}u^{3}, \ \ \ \tilde{h}\in (0,1/3).
\]
Let
\begin{eqnarray*}
u=\sqrt{\tilde{h}}\cos\theta (1+\sqrt{3\tilde{h}}\psi), \ \ \ \ w=\sqrt{3\tilde{h}}\sin\theta (1+\sqrt{3\tilde{h}}\psi),
\end{eqnarray*}
where
$
\psi=\frac{1}{3\sqrt{3}}\cos^{3}\theta+O(\sqrt{\tilde{h}})
$
for sufficiently small $h$.
Then we have
\begin{eqnarray*}
T(h)\!\!\!&=&\!\!\!\frac{3}{2}\oint_{\tilde{\Gamma}_{\tilde{h}}}\frac{du}{w}\\
    \!\!\!&=&\!\!\! -\frac{3}{2}\int_{0}^{2\pi}\frac{1}{w}\frac{d u}{d\theta}d\theta\\
    \!\!\!&=&\!\!\! -\frac{3}{2}\int_{0}^{2\pi}
       \frac{1}{\sqrt{3\tilde{h}}\sin\theta (1+\sqrt{3\tilde{h}}\psi)}
       \left(-\sqrt{\tilde{h}}\sin\theta (1+\sqrt{3\tilde{h}}\psi)+\sqrt{3}\tilde{h}\cos\theta \frac{d\psi}{d\theta}\right) d\theta\\
      \!\!\!&=&\!\!\!  \sqrt{3}\pi+O(\sqrt{\tilde{h}}),
\end{eqnarray*}
which yields that
\begin{eqnarray}\label{PF-LIMT}
T(h)\to \sqrt{3}\pi, \ \ \  \mbox{ as }  h \to 0.
\end{eqnarray}
So the linearized operator $\mathcal{L}^{3}$ also satisfies {\bf (H1)}.

Note that ${\bf i}\frac{\mu }{2}+\sqrt{1-\mu^{2}}={\bf i}\frac{\mu }{2}-\sqrt{1-\mu^{2}}$ if and only if $\mu=\pm 1$.
Then by (i) in Theorem \ref{thm-1} we prove the statements in (i).
By \eqref{PF-LIMT} and (ii) in Theorem \ref{thm-1},  we obtain (ii).
The last statement is proved by (iii) in Theorem \ref{thm-1}.
Therefore, the proof is now complete.
\end{proof}

\begin{remark}
Integrable systems always admit periodic solutions, which form a periodic annulus.
See, for instance, the family of periodic traveling waves $(\phi_{h},\psi_{h})$ for the hyperbolic Burgers-Fisher model \eqref{H-model}.
So our obtained results can be also used to deal with  the similar  problems for many other models,
such as the generalized Ginzburg-Landau equations \cite{Duan-Holme-95}.
\end{remark}

\section*{Acknowledgments}

This work was partly supported by the National Natural Science Foundation of China (Grant No. 12101253) and
the Scientific Research Foundation of CCNU (Grant No. 31101222044).
We would like to  thank the editor and the anonymous
referees for carefully reading the manuscript and providing valuable comments,
which are helpful for improving our manuscript.

\section*{CRediT authorship contribution statement}
{\bf Shuang Chen}: Conceptualization, Methodology, Formal analysis,  Writing-review \& editing-original draft.
{\bf Jinqiao Duan}: Conceptualization, Methodology, Writing-review \& editing-original draft.

\section*{Declaration of competing interest}

The authors declare that they have no known competing
financial interests or personal relationships that could have
appeared to influence the work reported in this paper.

\bibliographystyle{amsplain}

\end{document}